\documentclass[12pt, reqno]{amsart}

\usepackage{amssymb,latexsym,amsmath,amsfonts}
\usepackage{mathrsfs}
\usepackage{graphicx}
\usepackage[usenames]{color}
\usepackage{hyperref}
\usepackage{comment}
\usepackage{enumitem}

\definecolor{DPurple}{rgb}{0.46,0.2,0.69}

\numberwithin{equation}{section}

\allowdisplaybreaks

\theoremstyle{definition}
\newtheorem{definition}{Definition}[section]

\theoremstyle{remark}

 \theoremstyle{plain}
\newtheorem{theorem}[definition]{Theorem}

\newtheorem{lemma}[definition]{Lemma}
\newtheorem{proposition}[definition]{Proposition}
\newtheorem{example}{Example}[section]

\newtheorem{algo}[definition]{Algorithm}


\usepackage{tikz}
\usetikzlibrary{positioning}
\title{Structural Taxonomy and Complex Irreducible Representations of Non-Abelian Groups of Order $p^4$}

\author{Satyam Shukla}
\address{Department of Mathematics, Babasaheb Bhimrao Ambedkar University, Lucknow-226025, India}
\email{satyam7112shukla@gmail.com}

\author{Jagmohan Tanti}
\address{Department of Mathematics, Babasaheb Bhimrao Ambedkar University, Lucknow-226025, India}
\email{jagmohan.t@gmail.com}

\begin{document}

\begin{abstract}
Let $p$ be an odd prime and let $G$ be a non-abelian group of order $p^4$. The classification of groups of order $p^4$ provides a natural setting for studying the structure of their complex group algebras. In this paper, we consider a non-abelian group $G$ of order $p^4$ and determine the primitive central idempotents of the complex group algebra $\mathbb{C}G$. Using these idempotents, we obtain the corresponding decomposition of $\mathbb{C}G$ into simple components and describe the irreducible representations of $G$. The results provide an explicit description of the representation-theoretic structure of the chosen non-abelian group of order $p^4$ and illustrate the role of primitive central idempotents in the Wedderburn decomposition of its complex group algebra.\smallskip
\end{abstract}
\keywords{Irreducible representation; Primitive central idempotent; Group algebra.}
\subjclass[2020]{Primary 20C15; Secondary 20C05}
\maketitle
\section{Introduction}
The study of finite groups of prime-power order occupies an important position in finite group theory. In particular, groups of order $p^n$, where $p$ is a prime, exhibit rich structural properties arising from their nilpotency, normal subgroup structure, and central series. The case of groups of order $p^4$ is especially interesting because, although the order is relatively small, it contains several distinct isomorphism types of abelian and non-abelian groups \cite{Hasanat}. For an odd prime $p$, the classification of groups of order $p^4$ provides a natural starting point for investigating their algebraic and representation-theoretic properties. In the present work, we focus on the non-abelian groups of order $p^4$ and study their complex irreducible representations in relation to their group structure.

The classification of non-abelian groups of order $p^4$ gives a finite collection of structural types, each of which can be described by a suitable presentation. These presentations make it possible to identify normal subgroups, quotient groups, conjugacy classes, and other structural features that play an essential role in representation theory. Thus, the classification is not only useful for distinguishing the groups up to isomorphism but also provides the structural framework required for constructing their irreducible representations. Our aim is to use this classification as a basis for developing an explicit representation-theoretic description of the non-abelian groups of order $p^4$.

The classical approach to the construction of complex representations of finite solvable groups is based on Clifford theory \cite{Clifford}. Since every group of order $p^4$ is a finite $p$-group and hence solvable, Clifford theory provides a natural and effective method for studying their complex irreducible representations. The theory establishes a relationship between the representations of a group and those of its normal subgroups and allows irreducible representations of the whole group to be constructed from suitable representations of normal subgroups. Several developments of this approach can be found in \cite{Burgisser, Clifford, Lux, Pradhan}. In this work, we study this approach in detail and apply it to the non-abelian groups of order $p^4$.

For a finite solvable group, one can construct inequivalent irreducible matrix representations over the field of complex numbers by using a suitable long presentation'' of the group. In terms of a system of long generators'', this method also gives explicit expressions for the primitive central idempotents of the complex group algebra $\mathbb{C}[G]$. Furthermore, it provides a well-defined system of primitive, not necessarily central, idempotents and a convenient set of generators for the diagonal subalgebra of $\mathbb{C}[G]$. These constructions establish an explicit connection between the structural presentation of the group and the decomposition of its complex group algebra.

In particular, for a non-abelian group $G$ of order $p^4$, where $p$ is an odd prime, we investigate the irreducible complex representations corresponding to the different isomorphism types arising in the classification. For each such group, the structural information obtained from its presentation and normal subgroup structure can be incorporated into Clifford theory to construct its irreducible representations. We also investigate a systematic method for determining the corresponding primitive central idempotents of $\mathbb{C}[G]$. Thus, the classification of non-abelian groups of order $p^4$ serves as the structural foundation, while Clifford theory and the theory of complex group algebras provide the representation-theoretic framework.

The literature contains several important results that motivate this approach. For representation theory, the basic ideas for constructing representations of finite groups are presented in \cite{Serre}, \cite{Musili}, and \cite{Artin} and Berman \cite{Berman,D.Berman} developed extension theory for describing irreducible representations of finite solvable groups. By combining extension theory with Clifford theory \cite{Clifford}, Pradhan \cite{Pradhan} constructed primitive central idempotents of complex group algebras and, through these idempotents, described representation spaces and irreducible representations for several finite groups, including the symmetric groups $S_3$ and $S_4$, the dihedral group $D_8$, the alternating group $A_4$, the quaternion group $Q_8$, and the special linear group $SL_2(3)$. These results demonstrate the effectiveness of Clifford theory and primitive central idempotents in obtaining explicit realizations of complex representations.

Motivated by these developments, the present work combines the structural classification of non-abelian groups of order $p^4$ with Clifford theory and extension theory to investigate their complex matrix representations. The classification enables us to treat the different non-abelian isomorphism types systematically, while the Clifford-theoretic approach provides a method for constructing their irreducible representations from appropriate normal subgroups and their representations. In addition, the associated primitive central idempotents of $\mathbb{C}[G]$ are determined, thereby giving an algebraic description of the decomposition of the complex group algebra.

The principal purpose of this work is therefore to establish a systematic connection between the classification of non-abelian groups of order $p^4$, their complex irreducible matrix representations, and the primitive central idempotents of their complex group algebras. This approach not only provides explicit representations for the groups under consideration but also illustrates how structural information obtained from the classification can be translated into representation-theoretic information through Clifford theory. The resulting framework gives a unified method for studying the complex representation theory of non-abelian groups of order $p^4$, with $p$ an odd prime.

\section{Preliminaries}
\begin{definition}
Let $G$ be a group, $P \trianglelefteq G$, and $Q \leq G$. Then $G$ is the \emph{semidirect product} of $P$ and $Q$, denoted by
\[
G = P \rtimes Q,
\]
if
\[
G = PQ
\qquad\text{and}\qquad
P \cap Q = \{e\}.
\]
In this case, every element $g \in G$ can be written uniquely in the form
\[
g = xy, \qquad x \in P,\; y \in Q.
\]
The action of $Q$ on $P$ is given by conjugation,
\[
\varphi \colon Q \longrightarrow \operatorname{Aut}(P),
\qquad
\varphi(y)(x) = y^{-1}xy.
\]
Thus, $G$ is isomorphic to the external semidirect product $P \rtimes_{\varphi} Q$.
\end{definition}
\begin{definition}
A representation of a finite group $G$ over $K$ is a group homomorphism $\rho: G\rightarrow Aut_K(V)$, where $V$ is a finite-dimensional vector space over $F$ and the dimension of $V$ is called the degree of the representation $(\rho,V)$ and is denoted by deg $\rho$.
\end{definition}
 \begin{definition}
A finite group $G$ is solvable if it admits a subnormal series 
\[
\langle e\rangle =H_0\trianglelefteq H_1\trianglelefteq \cdots\trianglelefteq H_n =G 
\]
of subgroups of $G$ such that each factor group $H_{i+1}/H_i$ is abelian.
 \end{definition}
 \subsection{Discussion}
Let $G$ be a finite solvable group. Then there is a subnormal series
\[
\langle e \rangle = H_0 \trianglelefteq H_1 \trianglelefteq \cdots \trianglelefteq H_n = G
\]
such that for $i = 1,2,\dots,n$, the factor group $H_{i}/H_{i-1}$ is isomorphic to a 
cyclic group of prime order $p_i$. Suppose that $x_{i}\in H_{i}$ is such that 
$x_{i} H_{i-1}$ generates $H_{i}/H_{i-1}$. Then $G$ has a presentation
\[
G = \left\langle x_1, x_2, \dots, x_n \ \middle| \
x_i^{p_i} = \alpha_i,\; x_j^{-1} x_i x_j = \alpha_{ij} \ (1 \le i < j \le n)
\right\rangle,
\]
where $\alpha_i$ and $\alpha_{ij}$ are certain words in $x_1, x_2, \dots, x_{i-1}$.  We call such a presentation a \emph{long presentation} of $G$, and the set $\{x_1, x_2, \dots, x_n\}$ is called a \emph{long system of generators}, or simply the \emph{long generators} of $G$.
 \begin{definition}
Let \(G\) be a finite group with identity element \(1_G\), and let \(F\) be a field. The \emph{group algebra of \(G\) over \(F\)}, denoted by \(F[G]\), is defined as 
\[
F[G]
=
\left\{
\sum_{g\in G}\alpha_g g : \alpha_g\in F
\right\},
\]
equipped with multiplication determined by the group operation on \(G\) and extended \(F\)-bilinearly. More explicitly, for
\[
x=\sum_{g\in G}\alpha_g g,
\qquad
y=\sum_{h\in G}\beta_h h,
\]
their sum and product are defined by
\[
x+y
=
\sum_{g\in G}(\alpha_g+\beta_g)g
\]
and
\[
xy
=
\sum_{g,h\in G}\alpha_g\beta_h(gh)
=
\sum_{t\in G}
\left(
\sum_{\substack{g,h\in G\\gh=t}}
\alpha_g\beta_h
\right)t.
\]
Since
\[
gh=t
\iff
h=g^{-1}t,
\]
the coefficient of \(t\) in \(xy\) can equivalently be written as
\[
\sum_{g\in G}\alpha_g\beta_{g^{-1}t}.
\]
Scalar multiplication by \(a\in F\) is given by
\[
ax
=
\sum_{g\in G}(a\alpha_g)g.
\]
With these operations, \(F[G]\) is an associative unital \(F\)-algebra whose identity element is \(1_G\), which is identified with the corresponding basis element of \(F[G]\).

Assume now that \(F[G]\) is \emph{semisimple}. By the Artin--Wedderburn theorem, \(F[G]\) decomposes as a finite direct product, equivalently as a direct sum of ideals,
\[
F[G]
=
A_1\oplus\cdots\oplus A_r,
\]
where each \(A_i\) is a minimal two-sided ideal and, as an \(F\)-algebra,
\[
A_i\cong M_{n_i}(D_i)
\]
for some positive integer \(n_i\) and some finite-dimensional division algebra \(D_i\) over \(F\).

Corresponding to this decomposition, there exist uniquely determined elements
\[
e_1,\ldots,e_r\in F[G],
\]
called the \emph{primitive central idempotents} of \(F[G]\), such that
\[
A_i=F[G]e_i=e_iF[G].
\]
They satisfy the following properties

\begin{enumerate}
    \item \textbf{Idempotence:}
    \[
    e_i^2=e_i.
    \]

    \item \textbf{Centrality:}
    \[
    e_i\in Z(F[G]),
    \]
    equivalently,
    \[
    e_i x=xe_i
    \qquad\text{for every }x\in F[G].
    \]

    \item \textbf{Orthogonality:}
    \[
    e_i e_j=0
    \qquad (i\neq j).
    \]

    \item \textbf{Completeness:}
    \[
    e_1+\cdots+e_r=1_{F[G]}.
    \]

    \item \textbf{Primitivity among central idempotents:}
    \(e_i\) cannot be expressed as
    \[
    e_i=f+g,
    \]
    where \(f,g\in Z(F[G])\) are nonzero idempotents satisfying
    \[
    fg=0.
    \]
\end{enumerate}

Thus, the elements \(e_1,\ldots,e_r\) form the \emph{complete set of primitive central idempotents} of \(F[G]\).
 \end{definition}
 \begin{lemma}
Let $G$ be a solvable group. Then $G$ has a maximal subnormal series in which each successive quotient is a cyclic group of prime order. The penultimate subgroup in this series is therefore cyclic of prime order, and all its irreducible complex representations are one-dimensional. Hence, we can construct all irreducible complex representations of $G$ by starting with one-dimensional representations of subgroups and applying the processes of extension and induction.
 \end{lemma}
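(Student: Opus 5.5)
The plan is to build the series from the derived series, then run an induction up the series using Clifford theory for normal subgroups of prime index.

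\emph{The series.} Since $G$ is finite and solvable, it has a subnormal series with abelian factors, for instance the derived series. I refine each abelian factor $H_{i+1}/H_i$ by a chain of subgroups with successive quotients of prime order. This is possible because a finite abelian group has a chain of subgroups with prime-order quotients, for example by Cauchy's theorem and induction on the order. Every subgroup of an abelian group is normal, so by the correspondence theorem the refined chain is still subnormal in $G$. Its successive quotients are cyclic of prime order, hence simple, so the series cannot be refined further and is maximal. This gives exactly the series used in the Discussion above. Its first nontrivial term $H_1 \cong H_1/H_0$ is cyclic of prime order $p_1$; this is the subgroup the statement calls ``penultimate'' when the series is read from $G$ downward. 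Since $H_1$ is abelian, Schur's lemma shows that its irreducible complex representations are one-dimensional, namely the $p_1$ characters $x_1 \mapsto \zeta^k$ with $\zeta = e^{2\pi i/p_1}$.

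\emph{Inductive step.} I prove by induction on $i$ that every irreducible representation of $H_i$ is obtained from one-dimensional representations of $H_1$ by successive extensions and inductions. Assume this for $H_{i-1}$, and let $\rho$ be an irreducible representation of $H_i$. Let $\psi$ be an irreducible constituent of the restriction $\rho|_{H_{i-1}}$. The group $H_i/H_{i-1} = \langle x_i H_{i-1}\rangle$ acts on the irreducibles of $H_{i-1}$ by conjugation. Since $H_i/H_{i-1}$ has prime order $p_i$, the inertia group of $\psi$ is either $H_{i-1}$ or $H_i$, and Clifford's theorem gives the two cases.

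\begin{enumerate}
\item If the inertia group is $H_{i-1}$, then $\psi$ has $p_i$ distinct conjugates and $\operatorname{Ind}_{H_{i-1}}^{H_i}\psi$ is irreducible; this follows from Mackey's criterion. By Frobenius reciprocity, $\rho$ must equal this induced representation.
\item If $\psi$ is $H_i$-invariant, I show that $\psi$ extends to $H_i$. By Gallagher's theorem, the extensions are then $\tilde\psi\otimes\lambda$ for the $p_i$ characters $\lambda$ of $H_i/H_{i-1}$, and $\rho$ is one of them.
\end{enumerate}

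Together with the induction hypothesis, this completes the induction, and taking $i = n$ gives the statement for $G$.

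\emph{Main obstacle: existence of the extension.} The step I expect to be hardest is showing that an invariant $\psi$ extends. Invariance means $\psi^{x_i} \cong \psi$, so there is an invertible $A$ with
\[
A^{-1}\psi(h)A=\psi(x_i^{-1}hx_i) \quad \text{for all } h\in H_{i-1}.
\]
Iterating this $p_i$ times shows that $A^{p_i}$ and $\psi(x_i^{p_i})$ induce the same conjugation on $\psi(H_{i-1})$, where $x_i^{p_i} = \alpha_i \in H_{i-1}$. By Schur's lemma, $A^{p_i} = c\,\psi(\alpha_i)$ for some scalar $c \neq 0$. Replacing $A$ by $c^{-1/p_i}A$, which uses that $\mathbb{C}$ is algebraically closed, gives $A^{p_i} = \psi(\alpha_i)$. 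I then define $\tilde\psi(h x_i^k) = \psi(h)A^k$ for $0 \le k < p_i$. The remaining work is a routine check that $\tilde\psi$ respects the relations $x_j^{-1}x_i x_j = \alpha_{ij}$ and $x_i^{p_i}=\alpha_i$ of the long presentation, so that it is a homomorphism. This explicit choice of $A$ is also what makes the later idempotent computations concrete.
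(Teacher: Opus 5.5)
Your proof is correct. The paper gives no proof of this lemma; it states it as background from Pradhan's framework. The only supporting material in the paper is the Discussion, which asserts without proof that a series with prime-order factors exists, and the later theorem on a normal subgroup of prime index. That theorem, in the invariant case, only produces \emph{some} irreducible that is ``an extension or a generalized extension'' over a general field $F$.

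Your argument follows the same Clifford-theoretic outline but fills in what the paper leaves implicit:
\begin{enumerate}
\item You construct the composition series by refining the derived series.
\item More importantly, you show that over $\mathbb{C}$ every $H_i$-invariant irreducible $\psi$ of $H_{i-1}$ extends genuinely. This comes from the Schur scalar in $A^{p_i}=c\,\psi(\alpha_i)$ and rescaling $A$ by a $p_i$-th root of $c$.
\item Combined with Frobenius reciprocity and Gallagher (or $\operatorname{Ind}\psi\cong\bigoplus_\lambda\tilde\psi\otimes\lambda$), every irreducible $\rho$ of $H_i$ is an extension or an induced representation.
\end{enumerate}
Point (3) is exactly what justifies the lemma's claim that only ``extension and induction'' are needed, with no generalized extensions; the paper's theorem as stated does not give this.

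Your reading of ``penultimate'' as $H_1$, i.e.\ the series read from $G$ downward, is also the only one under which the second sentence is true. With the paper's indexing, $H_{n-1}$ has index $p_n$ and is usually not cyclic.

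Minor points:
\begin{itemize}
\item Finiteness of $G$ should be an explicit hypothesis; an infinite solvable group such as $\mathbb{Z}$ has no composition series.
\item When passing from $H_{i-1}$ to $H_i$, the new relations are $x_i^{-1}x_jx_i=\alpha_{ji}$ for $j<i$ and $x_i^{p_i}=\alpha_i$. Your indices are swapped.
\item Since you define $\tilde\psi$ on the normal form $hx_i^k$, it is cleaner to check multiplicativity directly. When $k+k'\ge p_i$, write $x_i^{k+k'}=\alpha_i x_i^{k+k'-p_i}$, then use $A^k\psi(h')A^{-k}=\psi(x_i^kh'x_i^{-k})$ and $A^{p_i}=\psi(\alpha_i)$.
\item Checking relations instead requires von Dyck's theorem plus the fact that the truncated long presentation presents $H_i$. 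This holds because the normal form bounds the order of the presented group by $|H_i|$, but you should say so.
\end{itemize}
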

\begin{definition}
Let \(G\) be a finite group and \(H\) a subgroup of \(G\) with
index \(n\). Let \(\eta\) be an \(F\)-representation of \(H\).
An \(F\)-representation \(\rho\) of \(G\) is called a
\textit{generalized extension} of \(\eta\) if its restriction to
\(H\) is a direct sum of \(m\) copies of \(\eta\), where
\(1 < m \leq n\). That is,
\[
\left.\rho\right|_H
\cong
\underbrace{\eta \oplus \eta \oplus \cdots \oplus \eta}
_{m \text{ copies}},
\qquad 1 < m \leq n.
\]  
\end{definition}

\begin{theorem}
Let $G$ be a group and $H$ a normal subgroup of $G$ such that $[G:H]=p$, where $p$ is a prime. Let $F$ be a field whose characteristic is zero or does not divide $|G|$, and let $\eta$ be an irreducible $F$-representation of $H$. Then the following statements hold:

\begin{enumerate}
    \item If the $G/H$-orbit of $\eta$ contains only one element, then there exists an irreducible $F$-representation of $G$ that is either an extension or a generalized extension of $\eta$.

    \item If the $G/H$-orbit of $\eta$ contains $p$ distinct representations, say $\eta_1,\eta_2,\ldots,\eta_p$, then their induced representations to $G$ are all equivalent. Moreover, each induced representation is irreducible. That is,
    \[
    \eta_1 \uparrow_H^G\cong\eta_2 \uparrow_H^G\cong \cdots \cong\eta_p \uparrow_H^G\cong \rho,
    \]
    where $\rho$ is an irreducible representation of $G$.
\end{enumerate}
\end{theorem}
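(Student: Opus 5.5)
The plan is Clifford theory for a normal subgroup of prime index, with Frobenius reciprocity and Mackey's formula as the main tools. Write $G/H=\langle gH\rangle$ with $g^p\in H$. The group $G$ acts on representations of $H$ by conjugation, $\eta^{x}(h)=\eta(x^{-1}hx)$. Since $H$ acts trivially up to equivalence, this is an action of the cyclic group $G/H$ of order $p$. Hence every orbit has size $1$ or $p$, which is exactly the dichotomy in the statement.

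\emph{Case (2).} The orbit is $\{\eta,\eta^{g},\dots,\eta^{g^{p-1}}\}$ with all members pairwise inequivalent. Set $\rho=\eta\uparrow_H^G$. Because $H$ is normal, Mackey's formula gives
\[
\rho|_H\cong \bigoplus_{i=0}^{p-1}\eta^{g^i}.
\]
Frobenius reciprocity then yields
\[
\langle \rho,\rho\rangle_G=\langle \eta,\rho|_H\rangle_H=\sum_i\langle\eta,\eta^{g^i}\rangle_H=1,
\]
since only $i=0$ contributes. Over $\mathbb{C}$ this means $\rho$ is irreducible. Over a general $F$ of characteristic $0$ or coprime to $|G|$, I would instead argue that $\operatorname{End}_G(\rho)\cong \operatorname{Hom}_H(\eta,\rho|_H)\cong\operatorname{End}_H(\eta)$ is a division ring; together with Maschke's theorem this gives irreducibility. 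For the equivalence claim, note that $\eta_j=\eta^{g^{j}}$, and the map $v\otimes x\mapsto v\otimes g^{-j}x$ (or, equivalently, the fact that the induced module depends only on the conjugacy class) gives $\eta^{g^j}\uparrow_H^G\cong\eta\uparrow_H^G$. Thus all $\eta_i\uparrow_H^G$ are equivalent to $\rho$.

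\emph{Case (1).} Now $\eta^{g}\cong\eta$. Choose an irreducible constituent $\rho$ of $\eta\uparrow_H^G$; this is possible by Maschke, since the characteristic hypothesis makes $F[G]$ semisimple. By Frobenius reciprocity, $\operatorname{Hom}_H(\eta,\rho|_H)\neq0$, so $\eta$ occurs in $\rho|_H$. Clifford's theorem says $\rho|_H$ is a sum of $G$-conjugates of a single irreducible constituent. I would prove this directly: the sum of all $H$-submodules of $\rho$ isomorphic to $\eta$ is $G$-stable, because $\eta^g\cong\eta$, so by irreducibility it equals $\rho$. Hence $\rho|_H\cong m\,\eta$ for some $m\ge1$. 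Moreover $\rho$ is a quotient of $\eta\uparrow_H^G$, whose restriction to $H$ is $p\,\eta$, so $m\le p=n$. If $m=1$, $\rho$ is an extension; if $m>1$, it is a generalized extension in the sense of the definition above.

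The main obstacle is the field-generality in case (1), together with keeping the irreducibility argument of case (2) valid over non-closed $F$. Over $\mathbb{C}$ one can show that $m=1$ always holds, via the cyclic quotient: choose $T$ with $T\eta(h)T^{-1}=\eta(g^{-1}hg)$, rescale $T$ so that $T^p=\eta(g^p)$, and extend. Over general $F$ this rescaling needs a $p$-th root of a scalar, which may not exist, and that is exactly why the statement allows a generalized extension. So I would avoid constructing $T$ and rely only on the module-theoretic Clifford argument above, which works over any $F$ where Maschke's theorem holds.
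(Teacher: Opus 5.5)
The paper gives no proof of this theorem. It is stated in the preliminaries as a background result from the extension theory of Berman and Pradhan, and is only illustrated by the example $Q_8\subset SL_2(3)$ over $\mathbb{Q}$. So there is nothing in the paper to compare your argument against. Judged on its own, your Clifford-theoretic proof is correct. It even gives slightly more than stated: every irreducible constituent $\rho$ of $\eta\uparrow_H^G$ satisfies $\rho|_H\cong m\,\eta$ with $1\le m\le p$. This is consistent with the example, where the trivial representation of $Q_8$ has both an extension and a generalized extension with $m=2$.

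The one step you should spell out is irreducibility in part (2) over a general $F$. Frobenius reciprocity only gives a linear isomorphism $\operatorname{End}_G(\rho)\cong\operatorname{Hom}_H(\eta,\rho|_H)$, which says nothing about the multiplication on $\operatorname{End}_G(\rho)$. The ring isomorphism $\operatorname{End}_G(\rho)\cong\operatorname{End}_H(\eta)$ needs three observations:
\begin{enumerate}
\item $1\otimes V$ is the entire $\eta$-isotypic component of $\rho|_H$, because the other summands $g^i\otimes V$ afford $\eta^{g^i}\not\cong\eta$. Hence every $G$-endomorphism maps $1\otimes V$ into itself.
\item Such an endomorphism is determined by its restriction to $1\otimes V$, since $1\otimes V$ generates $\rho$ over $F[G]$.
\item Restriction to $1\otimes V$ respects composition.
\end{enumerate}

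Alternatively, you can avoid endomorphism rings altogether. The restriction $\rho|_H\cong\bigoplus_{i}\eta^{g^i}$ is multiplicity-free, so every $G$-submodule $W$ equals $\bigoplus_{i\in S}g^i\otimes V$ for some $S\subseteq\mathbb{Z}/p\mathbb{Z}$ (indices mod $p$, since $g^p\in H$). Then $gW=W$ forces $S+1=S$, hence $S=\emptyset$ or $S=\mathbb{Z}/p\mathbb{Z}$.

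Your closing heuristic is not used in the proof, but it is imprecise. Over a non-split field the intertwiner is determined only up to units of $\operatorname{End}_H(\eta)$, and conjugation by $g$ can act nontrivially on these. So the obstruction to $m=1$ is in general a norm-type condition, not just the existence of a $p$-th root of a scalar. For example, the faithful $2$-dimensional real representation of $\langle i\rangle\subset Q_8$ is invariant but does not extend to $Q_8$ over $\mathbb{R}$.
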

\begin{example}
Let $G = SL_2(3)$, $H = Q_8$, and $F = \mathbb{Q}$. Since $|G| = 24$ and $|H| = 8$, the index of $H$ in $G$ is
\[
[G:H] = \frac{|G|}{|H|} = \frac{24}{8} = 3.
\]

The quaternion group $Q_8$ has five irreducible representations over $\mathbb{Q}$: four representations of degree $1$ and one irreducible representation of degree $4$. The representations of $Q_8$ can be related to those of $SL_2(3)$ as follows:

\begin{enumerate}
    \item \textbf{Trivial representation:}
    The trivial representation of $Q_8$ extends to the trivial representation of $SL_2(3)$.

    \item \textbf{Generalized extension of degree $2$:}
    Two copies of the trivial representation of $Q_8$ combine to give a generalized extension of degree $2$, corresponding to an irreducible representation of $SL_2(3)$.

    \item \textbf{Representation of degree $3$:}
    The three non-trivial irreducible representations of $Q_8$ of degree $1$ induce the same irreducible representation of $SL_2(3)$, which has degree $3$.

    \item \textbf{Extension of degree $4$:}
    The irreducible representation of $Q_8$ of degree $4$ extends to an irreducible representation of $SL_2(3)$ of the same degree.

    \item \textbf{Generalized extension of degree $8$:}
    Two copies of the degree-$4$ irreducible representation of $Q_8$ combine to form a generalized extension of degree $8$, corresponding to an irreducible representation of $SL_2(3)$.
\end{enumerate}
\end{example}
\begin{algo}\cite{Pradhan}
\textbf{Step 1.}
Let $G$ be a finite solvable group. Fix a subnormal series
\begin{equation}
\langle e\rangle = G_0 < G_1 < \cdots < G_n = G
\tag{*}
\end{equation}
such that $G_i/G_{i-1}$ is isomorphic to a cyclic group of
order $p_i$, where $p_i$ is a prime. Then $G$ has a long
presentation associated with the maximal subnormal series $(*)$:
\[
\left\langle x_1,x_2,\ldots,x_n \,\middle|\,
\begin{aligned}
x_i^{p_i} &= w_i(x_1,\ldots,x_{i-1}),\\
x_i^{-1}x_jx_i &= w_{ij}(x_1,\ldots,x_{i-1}),
\quad j<i
\end{aligned}
\right\rangle,
\]
where $w_i$ and $w_{ij}$ are certain words in
$x_1,x_2,\ldots,x_{i-1}$.\smallskip

\textbf{Step 2.}
Let $(\rho,V_\rho)$ be an irreducible representation of $G$ over $\mathbb{C}$, and let $e_\rho$ be the corresponding primitive central idempotent in $\mathbb{C}[G]$. We inductively construct the PCI-diagram associated with the subnormal series $(*)$. From the resulting PCI-diagram, we obtain the primitive central idempotent $e_\rho$.\smallskip

\textbf{Step 3.}
Let $D_G$ be the Gelfand--Tsetlin algebra of the inductive family of group algebras $\mathbb{C}[G_i]$, where $i=0,1,\ldots,n$. Then $D_G$ is a diagonal subalgebra (unique up to conjugacy) of the group algebra $\mathbb{C}[G]$. we have
\[
D_G =\left\langle C_{G_1}(x_1), C_{G_2}(x_2),\ldots,C_{G_n}(x_n)\right\rangle_{\mathbb{C}},
\]
where the right-hand side denotes the $\mathbb{C}$-algebra generated by these elements.\smallskip

\textbf{Step 4.}
Let $R=\mathbb{C}[G]$. Then $Re_\rho$ is the minimal two-sided ideal of $R$ corresponding to $e_\rho$. The algebra
\[
D_\rho := D_G e_\rho
\]
is the diagonal subalgebra of $Re_\rho$.\smallskip

\textbf{Step 5.}
Let $d$ be the degree of the representation $(\rho,V_\rho)$. The diagonal subalgebra $D_\rho$ contains a unique system
\[
\{f_1,f_2,\ldots,f_d\}
\]
of primitive idempotents satisfying
\[
f_1+f_2+\cdots+f_d=e_\rho.
\]
This system of primitive idempotents can be obtained from the PCI-diagram associated with the subnormal series $(*)$. Each of the minimal left ideals
\[
Rf_1,\ Rf_2,\ \ldots,\ Rf_d
\]
serves as a model for the representation space $V_\rho$. In fact, there are $d$ paths from the vertex $1$ to the vertex $e_\rho$ in the PCI-diagram. If
\[
1=e_0,e_1,\ldots,e_n=e_\rho
\]
are the vertices appearing along one such path, then the product
\[
f=e_0e_1\cdots e_n
\]
is the corresponding primitive idempotent and is a member of the system
\[
\{f_1,f_2,\ldots,f_d\}.
\]
Consequently, the minimal left ideal
\[
Re_0e_1\cdots e_n
\]
serves as a model for the representation space $V_\rho$.\smallskip

\textbf{Step 6.}
We can construct a basis of $V_\rho$ inside $R$. With respect to this basis, we can write down the matrices of the generators
\[
\rho(x_1),\rho(x_2),\ldots,\rho(x_n).
\]
\end{algo}
\section*{Main Results}
\section{Classifications of non-abelian group of order $p^4$}
In this section, we consider the non-abelian groups of order $p^4$. Following the classification given in \cite{Hasanat}, these groups can be described by ten families, denoted by $F_1,F_2,\ldots,F_{10}$. The presentations of these groups provide explicit information about their generators, defining relations, commutator structure ($[x,y]=x^{-1}y^{-1}xy$), and normal subgroups. Such information is
particularly useful for the construction of irreducible complex representations.
\begin{proposition}
Let \(p\) be an odd prime, and let
\[
F_1=\langle a,b\mid a^{p^2}=b^p=[a,b]^p
=[a,[a,b]]=[b,[a,b]]=e\rangle
\]
be a non-abelian group of order \(p^4\), then $F_1$ is an internal semidirect product
\[
F_1=A\rtimes B,
\]
where
\[
A=\langle a,[a,b]\rangle\cong C_{p^2}\times C_p,
\qquad B=\langle b\rangle\cong C_p.
\]
\end{proposition}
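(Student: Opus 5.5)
Set $c=[a,b]$. The relations say that $c$ has order dividing $p$ and commutes with both $a$ and $b$, so $c\in Z(F_1)$. The argument has four steps: show that $A=\langle a,c\rangle$ is abelian of the stated type, that $A$ is normal, that $A\cap B=\{e\}$, and that $AB=F_1$. The last two steps rely on the order $|F_1|=p^4$, which we take from the statement (and from the classification in \cite{Hasanat}).

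\textbf{Step 1: the structure of $A$.} Since $c$ is central, $A=\langle a,c\rangle$ is abelian. The map $(i,j)\mapsto a^ic^j$ is a surjective homomorphism $C_{p^2}\times C_p\to A$, so $|A|\le p^3$. We need to see that $a$ has order exactly $p^2$, that $c$ has order exactly $p$, and that $\langle a\rangle\cap\langle c\rangle=\{e\}$. The first two facts are part of saying the presentation defines a group of order $p^4$ rather than a proper quotient; in particular $c\neq e$ because $F_1$ is non-abelian. For the intersection, the cleanest route is a counting argument using $|F_1|=p^4$.

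\textbf{Step 2: normality and normal form.} Conjugation by $b$ preserves $A$. Indeed, $c^b=c$, and $b^{-1}ab=a\,[a,b]=ac$ by the convention $[x,y]=x^{-1}y^{-1}xy$, so $a^b=ac$. Conjugation by $a$ and $c$ fixes $A$ trivially. Hence $A\trianglelefteq F_1$.

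It follows that $AB=\{a^ic^jb^k\}$ is a subgroup containing the generators $a$ and $b$, so $AB=F_1$. Counting gives
\[
|F_1|\le |A|\,|B|\le p^3\cdot p=p^4.
\]
Since $|F_1|=p^4$, every inequality is an equality. Therefore $|A|=p^3$, which forces $A\cong C_{p^2}\times C_p$ via the map of Step 1, and $|B|=p$. The product formula $|AB|=|A||B|/|A\cap B|$ then gives $A\cap B=\{e\}$. By the definition of the semidirect product, $F_1=A\rtimes B$, with $b$ acting on $A$ by $a\mapsto ac$ and $c\mapsto c$.

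\textbf{Main obstacle.} The delicate point is justifying $|F_1|=p^4$ exactly, since the proof above only yields $|F_1|\le p^4$ from the presentation. If we do not simply take this from the statement or \cite{Hasanat}, the plan is to construct $F_1$ externally. Take $A'=C_{p^2}\times C_p=\langle \alpha\rangle\times\langle\gamma\rangle$ and the automorphism $\varphi(\alpha)=\alpha\gamma$, $\varphi(\gamma)=\gamma$. We check that $\varphi^p=\mathrm{id}$, using $\varphi^k(\alpha)=\alpha\gamma^k$. Then $A'\rtimes_\varphi C_p$ has order $p^4$ and satisfies all the defining relations. By von Dyck's theorem it is a quotient of $F_1$, so $|F_1|\ge p^4$, which completes the argument.
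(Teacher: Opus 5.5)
Your proposal is correct and follows essentially the same route as the paper: $c=[a,b]$ is central, $A=\langle a,c\rangle$ is abelian and normalized by $b$ via $b^{-1}ab=ac$, $F_1=AB$, and $A\cap B=\{e\}$ follows by counting orders. The one difference is in your favor. The paper simply assumes that $a$ has order $p^2$, that $c$ has order $p$, and that $c\notin\langle a\rangle$. You instead derive $|A|=p^3$ and $|B|=p$ from the squeeze $p^4=|F_1|\le|A||B|\le p^4$, and you justify $|F_1|=p^4$ itself through the external model $(C_{p^2}\times C_p)\rtimes_\varphi C_p$ and von Dyck's theorem.
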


\begin{proof}
Let \(c=[a,b]\). By the defining relations, we have
\[
a^{p^2}=b^p=c^p=e,
\qquad [a,c]=[b,c]=e.
\]
Thus \(c\in Z(F_1)\), and consequently
\(A=\langle a,c\rangle\) is an abelian subgroup of \(F_1\).

Assuming that \(a\) has order \(p^2\), \(c\) has order \(p\),
and \(c\notin\langle a\rangle\), we obtain
\[
A\cong C_{p^2}\times C_p,
\qquad |A|=p^3.
\]

Since \(c=[a,b]\), the commutator relation gives
\[
a^{-1}b^{-1}ab=c,
\]
and therefore
\[
b^{-1}ab=ac.
\]
Furthermore, since \(c\in Z(F_1)\),
\[
b^{-1}cb=c.
\]
It follows that \(b^{-1}Ab=A\), so \(A\trianglelefteq F_1\).

Now let \(B=\langle b\rangle\). Since \(b^p=1\) and
\(b\ne 1\), we obtain \(B\cong C_p\). As \(F_1=\langle a,b\rangle\)
and \(a,c\in A\), we have
\[
F_1=AB.
\]
Moreover,
\[
|A\cap B|
=\frac{|A||B|}{|F_1|}
=\frac{p^3\cdot p}{p^4}
=1.
\]
Therefore,
\[
F_1=A\rtimes B.
\]

It remains to determine the action of \(B\) on \(A\).
Conjugation by \(b^{-1}\) induces the automorphism
\[
\phi:A\longrightarrow A,\qquad
\phi(x)=b^{-1}xb.
\]
From the preceding relations,
\[
\phi(a)=ac,\qquad \phi(c)=c.
\]
Since \(a\) and \(c\) commute, for all integers \(i,j\),
\[
\phi(a^ic^j)
=(ac)^ic^j
=a^ic^{i+j}.
\]
More generally, for every integer \(r\geq 0\),
\[
\phi^r(a)=ac^r,\qquad
\phi^r(c)=c.
\]
Consequently, using \(c^p=1\), we obtain
\[
\phi^p(a)=ac^p=a,\qquad
\phi^p(c)=c.
\]
Thus
\[
\phi^p=\operatorname{id}_A.
\]
Since \(c\ne 1\), we have
\(\phi\ne\operatorname{id}_A\), and hence
\[
|\phi|=p.
\]

Therefore, the semidirect product is determined by the action
\[
\phi(a)=ac,\qquad \phi(c)=c.
\]
Hence
\[
F_1=(C_{p^2}\times C_p)\rtimes_{\phi} C_p
\]
This completes the proof.
\end{proof}
\begin{proposition}
Let \(p\) be an odd prime, and let
\[
F_2=
\left\langle a,b\;\middle|\;
a^{p^2}=b^{p^2}=e,\ [b,a]=b^p
\right\rangle
\]
be a group of order \(p^4\), then \(F_2\) is an internal semidirect product
\[
F_2=A\rtimes B,
\]
where
\[
A=\langle b\rangle\cong C_{p^2},
\qquad
B=\langle a\rangle\cong C_{p^2}.
\]
\end{proposition}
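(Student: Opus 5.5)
The plan follows the proof of the preceding proposition for $F_1$. First I would unpack the commutator relation with the paper's convention $[x,y]=x^{-1}y^{-1}xy$. The relation $[b,a]=b^p$ reads $b^{-1}a^{-1}ba=b^p$. Multiplying on the left by $b$ gives $a^{-1}ba=b^{1+p}$. Hence conjugation by $a$ maps $b$ to $b^{1+p}\in\langle b\rangle$. Since $a$ and $b$ generate $F_2$, and $b$ trivially normalizes $\langle b\rangle$, this gives $A=\langle b\rangle\trianglelefteq F_2$.

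Next I would settle the orders. As in the $F_1$ case, the hypothesis that $F_2$ has order $p^4$ is what I would lean on. Because $A$ is normal, $F_2=\langle a,b\rangle=AB$ with $B=\langle a\rangle$. The relations give $|A|\le p^2$ and $|B|\le p^2$, so $p^4=|AB|=|A||B|/|A\cap B|\le p^4/|A\cap B|$. This forces $|A|=|B|=p^2$ and $A\cap B=\{e\}$. So $A\cong B\cong C_{p^2}$, and $F_2=A\rtimes B$ by the semidirect product definition in the Preliminaries. This counting step replaces the need to verify independently that $b$ and $a$ have exact order $p^2$ and that $\langle a\rangle\cap\langle b\rangle$ is trivial.

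Finally I would describe the action, to parallel the $F_1$ statement. Let $\phi(x)=a^{-1}xa$ on $A$, so $\phi(b^j)=b^{j(1+p)}$. By induction, $\phi^r(b)=b^{(1+p)^r}$. The binomial theorem gives $(1+p)^r\equiv 1+rp \pmod{p^2}$. Hence $\phi^p=\mathrm{id}_A$, while $\phi\neq\mathrm{id}_A$ because $b^p\neq e$. So $\phi$ has order $p$, and the kernel of $B\to\operatorname{Aut}(A)$ is $\langle a^p\rangle$.

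The main obstacle is the order claim. I need the group to genuinely have order $p^4$, rather than the relations collapsing it. I rely on the stated hypothesis $|F_2|=p^4$ for this. If an independent check were wanted, I would construct the external semidirect product $C_{p^2}\rtimes_\phi C_{p^2}$ with $\phi(b)=b^{1+p}$. This is well defined because $\phi$ has order $p$, which divides $p^2$. That group has order $p^4$ and satisfies the relations, so $F_2$ maps onto it.
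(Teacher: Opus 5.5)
Your proof is correct and follows essentially the same route as the paper. Like the paper, you derive $a^{-1}ba=b^{p+1}$, conclude $\langle b\rangle\trianglelefteq F_2$, write $F_2=AB$, use the product formula to get $A\cap B=\{e\}$, and verify that the conjugation action has order $p$ via $(1+p)^p\equiv 1 \pmod{p^2}$. Feeding the bounds $|A|,|B|\le p^2$ into the product formula is a small improvement: it deduces $|A|=|B|=p^2$ from $|F_2|=p^4$, whereas the paper simply takes these orders for granted.
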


\begin{proof}
By the defining relations, we have
\[
a^{p^2}=b^{p^2}=e,
\qquad [b,a]=b^p.
\]
Using the commutator relation
\[
[b,a]=b^{-1}a^{-1}ba=b^p,
\]
we obtain
\[
b^{-1}a^{-1}ba=b^p.
\]
Multiplying on the left by \(b\), we get
\[
a^{-1}ba=b^{p+1}.
\]
Thus conjugation by \(a^{-1}\) maps \(b\) to \(b^{p+1}\).

Consequently,
\[
a^{-1}Aa
=\langle a^{-1}ba\rangle
=\langle b^{p+1}\rangle.
\]
Since \(\gcd(p+1,p^2)=1\), the element \(b^{p+1}\)
generates \(\langle b\rangle\). Therefore,
\[
a^{-1}Aa=A,
\]
so \(A\trianglelefteq F_2\).

Since \(F_2=\langle a,b\rangle\), we have
\[
F_2=AB.
\]
Moreover, by the order formula,
\[
|A\cap B|
=\frac{|A||B|}{|AB|}
=\frac{p^2\cdot p^2}{p^4}
=1.
\]
Hence
\[
A\cap B=\{e\}.
\]

Since \(A\trianglelefteq F_2\), \(F_2=AB\), and
\(A\cap B=\{e\}\), it follows that
\[
F_2=A\rtimes B
\cong C_{p^2}\rtimes C_{p^2}.
\]

It remains to determine the action of \(B\) on \(A\).
Conjugation by \(a^{-1}\) induces the automorphism
\[
\phi:A\longrightarrow A,\qquad
\phi(x)=a^{-1}xa.
\]
From the preceding relations,
\[
\phi(b)=b^{p+1}.
\]
Therefore, for every integer \(i\),
\[
\phi(b^i)=b^{i(p+1)}.
\]

More generally, for every integer \(r\geq 0\),
\[
\phi^r(b)=b^{(p+1)^r}.
\]
Since \(p\) is an odd prime, the binomial theorem gives
\[
(p+1)^p\equiv 1\pmod{p^2}.
\]
Thus
\[
\phi^p(b)=b,
\]
and consequently
\[
\phi^p=\operatorname{id}_A.
\]

On the other hand,
\[
\phi(b)=b^{p+1}\ne b,
\]
because \(b^p\ne e\). Hence
\[
\phi\ne\operatorname{id}_A.
\]
Since \(p\) is prime and \(\phi^p=\operatorname{id}_A\),
we conclude that
\[
|\phi|=p.
\]

Therefore, the semidirect product is determined by the action
\[
\phi(b)=b^{p+1}.
\]
Hence
\[
F_2\cong C_{p^2}\rtimes_{\phi}C_{p^2}.
\]
This completes the proof.
\end{proof}
\begin{proposition}
Let \(p\) be an odd prime, and let
\[
F_3=
\left\langle a,b\;\middle|\;
a^{p^3}=b^p=e,\ [b,a]=a^{p^2}
\right\rangle
\]
be a non-abelian group of order \(p^4\), then \(F_3\) is an internal semidirect product
\[
F_3=A\rtimes B,
\]
where
\[
A=\langle a\rangle\cong C_{p^3},
\qquad
B=\langle b\rangle\cong C_p.
\]
\end{proposition}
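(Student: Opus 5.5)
The plan is to follow the same template as the proofs for $F_1$ and $F_2$. First I will rewrite the commutator relation as a conjugation rule. From
\[
[b,a]=b^{-1}a^{-1}ba=a^{p^2}
\]
we get $a^{-1}ba=ba^{p^2}$. Since $a^{p^2}$ is a power of $a$, it commutes with $a$. I then want to solve instead for $b^{-1}ab$. Writing the relation as $ba=ab\,a^{p^2}$ gives $b^{-1}ab=b^{-1}a^{-1}\cdot\ldots$, which is awkward, so I will argue directly. The relation says $b^{-1}a^{-1}b=a^{p^2}a^{-1}=a^{p^2-1}$. Inverting both sides gives
\[
b^{-1}ab=a^{1-p^2}=a^{1+p^2(p-1)}.
\]
In particular, conjugation by $b$ maps $a$ to a power of $a$. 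Hence $b^{-1}Ab\subseteq A$, and since $\gcd(1-p^2,p^3)=1$ we get equality, so $A=\langle a\rangle\trianglelefteq F_3$.

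Next I will get the semidirect decomposition. Since $F_3=\langle a,b\rangle$ and $A$ is normal, $F_3=AB$. Following the paper's convention that the presentation defines a group of order $p^4$ in which $a$ has order $p^3$ and $b$ has order $p$, the order formula gives
\[
|A\cap B|=\frac{p^3\cdot p}{p^4}=1.
\]
Therefore $F_3=A\rtimes B\cong C_{p^3}\rtimes C_p$.

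Finally I will determine the action. Define $\phi(x)=b^{-1}xb$ on $A$, so that $\phi(a)=a^{m}$ with $m=1-p^2$. Then $\phi^r(a)=a^{m^r}$. By the binomial theorem, $m^p\equiv 1-p^3\equiv 1\pmod{p^3}$, so $\phi^p=\operatorname{id}_A$. Also $\phi(a)=a\cdot a^{-p^2}\neq a$, because $a^{p^2}\neq e$ when $a$ has order $p^3$. Hence $|\phi|=p$, and this identifies the semidirect product $C_{p^3}\rtimes_\phi C_p$.

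The main subtlety is the sign of the conjugation exponent. Equivalently, using $(a^{p^2})^{p}=e$, one can write $a^{-1}ba=ba^{p^2}$ and then $b^{-1}ab=a^{1-p^2}$. So the action is $a\mapsto a^{1-p^2}$, or equivalently $a\mapsto a^{1+p^2}$ after replacing $b$ by $b^{-1}$, which generates the same $B$. Apart from this, the argument rests on the implicit assumption, used as in the paper's earlier proofs, that the presentation really yields elements of the stated orders $p^3$ and $p$.
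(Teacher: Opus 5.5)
Your proof is correct and follows the paper's argument essentially step for step: you rewrite $[b,a]=a^{p^2}$ as $b^{-1}ab=a^{1-p^2}$, deduce $A\trianglelefteq F_3$ and $F_3=AB$, use the order count to get $A\cap B=\{e\}$, and show that $\phi\colon a\mapsto a^{1-p^2}$ has order $p$ via $(1-p^2)^p\equiv 1\pmod{p^3}$. The assumption you flag is not actually needed: since $p^4=|AB|=|A||B|/|A\cap B|\le p^3\cdot p$, the hypothesis $|F_3|=p^4$ by itself forces $|a|=p^3$, $|b|=p$ and $A\cap B=\{e\}$.
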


\begin{proof}
By the defining relations, we have
\[
a^{p^3}=b^p=e,
\qquad [b,a]=a^{p^2}.
\]
Since \(a\) has order \(p^3\) and \(b\) has order \(p\),
we obtain
\[
A=\langle a\rangle\cong C_{p^3},
\qquad
B=\langle b\rangle\cong C_p.
\]

Using the commutator relation
\[
[b,a]=b^{-1}a^{-1}ba=a^{p^2},
\]
we obtain
\[
b^{-1}a^{-1}ba=a^{p^2}.
\]
Multiplying on the left by \(b\), we get
\[
a^{-1}ba=ba^{p^2}.
\]
Equivalently, the commutator relation gives
\[
b^{-1}ab=a^{1-p^2}.
\]
Since \(a^{p^3}=e\), conjugation by \(b\) maps
\(a\) to a power of \(a\). In particular,
\[
b^{-1}Ab=A,
\]
and hence
\[
A\trianglelefteq F_3.
\]

Since \(F_3=\langle a,b\rangle\), we have
\[
F_3=AB.
\]
Moreover,
\[
|A\cap B|
=\frac{|A||B|}{|F_3|}
=\frac{p^3\cdot p}{p^4}
=1.
\]
Thus
\[
A\cap B=\{e\}.
\]
Therefore,
\[
F_3=A\rtimes B
\cong C_{p^3}\rtimes C_p.
\]

It remains to determine the action of \(B\) on \(A\).
Conjugation by \(b^{-1}\) induces the automorphism
\[
\phi:A\longrightarrow A,\qquad
\phi(x)=b^{-1}xb.
\]
From the commutator relation, we obtain
\[
\phi(a)=a^{1-p^2}.
\]
Consequently, for every integer \(i\),
\[
\phi(a^i)=a^{i(1-p^2)}.
\]

More generally, for every integer \(r\geq 0\),
\[
\phi^r(a)=a^{(1-p^2)^r}.
\]
Since \(p\) is odd, the binomial theorem yields
\[
(1-p^2)^p\equiv 1\pmod{p^3}.
\]
Therefore,
\[
\phi^p(a)=a,
\]
and hence
\[
\phi^p=\operatorname{id}_A.
\]

Furthermore,
\[
\phi(a)=a^{1-p^2}\ne a,
\]
because \(a^{p^2}\ne e\). Thus
\[
\phi\ne\operatorname{id}_A.
\]
Since \(p\) is prime and \(\phi^p=\operatorname{id}_A\),
we conclude that
\[
|\phi|=p.
\]

Therefore, the semidirect product is determined by the action
\[
\phi(a)=a^{1-p^2}.
\]
Hence
\[
F_3\cong C_{p^3}\rtimes_{\phi}C_p.
\]
This completes the proof.
\end{proof}
\begin{proposition}
Let \(p\) be an odd prime, and let
\[
F_4=
\left\langle
a,b,c,d
\;\middle|\;
a^p=b^p=c^p=d^p=e,\,
[a,c]=[b,c]=[a,d]=[b,d]=[c,d]=e
\right\rangle
\]
be a non-abelian group of order \(p^4\) and \(d=[a,b]\), then \(F_4\) is an internal semidirect product
\[
F_4=A\rtimes B,
\]
where
\[
A=\langle a,c,d\rangle\cong C_p\times C_p\times C_p,
\qquad
B=\langle b\rangle\cong C_p.
\]
Moreover,
\[
F_4\cong
C_p\times\bigl((C_p\times C_p)\rtimes C_p\bigr).
\]
\end{proposition}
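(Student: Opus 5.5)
Following the proofs of the three preceding propositions, I will read the presentation with $d=[a,b]$ and aim for the same kind of decomposition. The relations say that $c$ commutes with $a$, $b$ and $d$, so $c\in Z(F_4)$. They also say $d$ commutes with $a$, $b$ and $c$, so $d\in Z(F_4)$. Since $a^p=c^p=d^p=e$, the subgroup $A=\langle a,c,d\rangle$ is abelian of exponent $p$. I will take as part of the intended presentation (as the paper does for $F_1$) that $|F_4|=p^4$, i.e. that $a,c,d$ are independent and $b\notin A$. Then $A\cong C_p\times C_p\times C_p$ and $|A|=p^3$.

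\textbf{Normality and the semidirect product.} From $d=[a,b]=a^{-1}b^{-1}ab$ we get $b^{-1}ab=ad$. Because $c$ and $d$ are central, $b^{-1}cb=c$ and $b^{-1}db=d$. Hence $b^{-1}Ab=A$. Together with $a^{-1}Aa=A$ (since $A$ is abelian), this gives $A\trianglelefteq F_4$. Put $B=\langle b\rangle\cong C_p$. Then $F_4=\langle a,b,c\rangle=AB$, and the order count $|A\cap B|=p^3\cdot p/p^4=1$ gives $F_4=A\rtimes B$. The action $\phi(x)=b^{-1}xb$ is $\phi(a)=ad$, $\phi(c)=c$, $\phi(d)=d$, so $\phi^r(a)=ad^r$. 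It follows that $\phi^p=\mathrm{id}_A$ and $\phi\neq\mathrm{id}_A$, so $|\phi|=p$, exactly as for $F_1$.

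\textbf{The direct product decomposition.} Let $H=\langle a,b\rangle$ and $C=\langle c\rangle$. By the same computation applied inside $H$, we have $H=\langle a,d\rangle\rtimes\langle b\rangle\cong(C_p\times C_p)\rtimes C_p$, with $|H|=p^3$; this is the Heisenberg group of order $p^3$. The subgroup $C$ is central, hence normal, and $H$ is normal because it contains the derived subgroup $\langle d\rangle$ (or directly, because $c$ centralizes $H$). Also $HC=F_4$. Finally $H\cap C=\{e\}$ by the order count $p^3\cdot p=p^4$, or equivalently because $c\notin H$, which follows from the independence of $a,c,d$ together with $b\notin A$. Therefore $F_4=C\times H\cong C_p\times\bigl((C_p\times C_p)\rtimes C_p\bigr)$.

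\textbf{Main obstacle.} The delicate point is justifying $|F_4|=p^4$, in particular that $H$ has order exactly $p^3$ and meets $C$ trivially. I would handle this by exhibiting a concrete model, $C_p$ times the upper unitriangular $3\times3$ matrices over $\mathbb{F}_p$, which satisfies all the relations. That shows the group defined by the presentation has order at least $p^4$. The normal form $a^ic^jd^kb^l$ obtained above shows it has order at most $p^4$, so equality holds.
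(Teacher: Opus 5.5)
Your proof is correct and follows essentially the same route as the paper: the same $A=\langle a,c,d\rangle$ and $B=\langle b\rangle$, the action $\phi(a)=ad$ with $\phi^p=\mathrm{id}_A$, and splitting off the central factor $\langle c\rangle$ from $\langle a,d\rangle\rtimes\langle b\rangle$. Your extra steps fill in points the paper simply assumes: you verify $|F_4|=p^4$ using the model $C_p\times U_3(\mathbb{F}_p)$ together with the normal form $a^ic^jd^kb^l$, and you check explicitly that $H=\langle a,b\rangle$ is normal and meets $\langle c\rangle$ trivially.
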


\begin{proof}
By the defining relations, we have
\[
a^p=b^p=c^p=d^p=e,
\]
and
\[
[a,c]=[b,c]=[a,d]=[b,d]=[c,d]=e.
\]
Since \(d=[a,b]\), the element \(d\) commutes with
\(a,b,c\). Thus \(d\in Z(F_4)\).

Let
\[
A=\langle a,c,d\rangle,
\qquad B=\langle b\rangle.
\]
Since \(a,c,d\) commute pairwise and have order \(p\),
we obtain
\[
A\cong C_p\times C_p\times C_p,
\qquad |A|=p^3.
\]
Also,
\[
B\cong C_p,
\qquad |B|=p.
\]

Using the commutator relation
\[
[a,b]=a^{-1}b^{-1}ab=d,
\]
we obtain
\[
b^{-1}ab=ad.
\]
Furthermore, since \(c,d\in Z(F_4)\), we have
\[
b^{-1}cb=c,
\qquad
b^{-1}db=d.
\]
Consequently,
\[
b^{-1}Ab=A,
\]
so \(A\trianglelefteq F_4\).

Since \(F_4=\langle a,b,c,d\rangle\), we have
\[
F_4=AB.
\]
Moreover,
\[
|A\cap B|
=\frac{|A||B|}{|F_4|}
=\frac{p^3\cdot p}{p^4}
=1.
\]
Therefore,
\[
A\cap B=\{e\}.
\]
Hence
\[
F_4=A\rtimes B.
\]

It remains to determine the action of \(B\) on \(A\).
Conjugation by \(b^{-1}\) induces the automorphism
\[
\phi:A\longrightarrow A,\qquad
\phi(x)=b^{-1}xb.
\]
From the preceding relations,
\[
\phi(a)=ad,\qquad
\phi(c)=c,\qquad
\phi(d)=d.
\]
Since \(a,c,d\) commute pairwise, for all integers \(i,j,k\),
\[
\phi(a^ic^jd^k)=a^ic^jd^{i+k}.
\]

More generally, for every integer \(r\geq 0\),
\[
\phi^r(a)=ad^r,\qquad
\phi^r(c)=c,\qquad
\phi^r(d)=d.
\]
Since \(d^p=e\), we obtain
\[
\phi^p(a)=ad^p=a,
\]
and hence
\[
\phi^p=\operatorname{id}_A.
\]
If \(d\ne e\), then
\[
\phi(a)=ad\ne a,
\]
so \(\phi\) is nontrivial. Therefore,
\[
|\phi|=p.
\]

Finally, since \(c\) commutes with both \(a\) and \(b\),
the subgroup \(\langle c\rangle\) is central in \(F_4\).
Writing
\[
A=\langle a,d\rangle\times\langle c\rangle,
\]
we obtain
\[
F_4
\cong
\langle c\rangle\times
\bigl(\langle a,d\rangle\rtimes\langle b\rangle\bigr).
\]
Here,
\[
\langle a,d\rangle\cong C_p\times C_p,
\qquad
\langle b\rangle\cong C_p,
\]
with the action given by
\[
b^{-1}ab=ad,\qquad b^{-1}db=d.
\]
Consequently,
\[
F_4\cong
C_p\times\bigl((C_p\times C_p)\rtimes C_p\bigr).
\]
This completes the proof.
\end{proof}
\begin{proposition}
Let \(p\) be an odd prime, and let
\[
F_5=
\left\langle
a,b,c
\;\middle|\;
a^{p^2}=b^p=c^p=e,\,
[a,c]=[b,c]=e,\,
[b,a]=a^p
\right\rangle
\]
be a non-abelian group of order \(p^4\), then \(F_5\) is an internal semidirect product
\[
F_5=A\rtimes B,
\]
where
\[
A=\langle a,c\rangle
\cong C_{p^2}\times C_p,
\qquad
B=\langle b\rangle\cong C_p.
\]
Moreover,
\[
F_5\cong C_p\times(C_{p^2}\rtimes C_p).
\]
\end{proposition}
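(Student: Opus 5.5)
The plan follows the template used for \(F_1\)--\(F_4\). First, from the relation \([b,a]=a^p\), i.e.\ \(b^{-1}a^{-1}ba=a^p\), we get \(a^{-1}ba=ba^p\). Rearranging gives the conjugation action of \(b\) on \(a\). From \(a^{-1}(ba)=ba^{p}\) we get \(ba=aba^p\), hence \(b^{-1}ab=a^{1-p}\) up to the convention (one of \(b^{\pm1}\) acts by \(a\mapsto a^{1\pm p}\)). Either way, conjugation by \(b\) sends \(a\) to a generator of \(\langle a\rangle\), since \(\gcd(1\pm p,p^2)=1\). The relations \([a,c]=[b,c]=e\) show that \(c\in Z(F_5)\).

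Next, set \(A=\langle a,c\rangle\). This is abelian because \(a\) and \(c\) commute. Using the stated order \(|F_5|=p^4\), together with \(a\) of order \(p^2\) and \(c\) of order \(p\) with \(c\notin\langle a\rangle\), we get \(A\cong C_{p^2}\times C_p\) and \(|A|=p^3\). The same assumption was made tacitly for \(F_1\); the honest justification is that \(A\) is the image of \(C_{p^2}\times C_p\) and \(|F_5|=p^4\) forces injectivity. We then check that \(b^{-1}Ab=A\): we have \(b^{-1}ab\in\langle a\rangle\) and \(b^{-1}cb=c\), so \(A\trianglelefteq F_5\).

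With \(B=\langle b\rangle\cong C_p\) and \(F_5=\langle a,b,c\rangle=AB\), the order formula \(|A\cap B|=p^3\cdot p/p^4=1\) gives \(F_5=A\rtimes B\). We describe the automorphism \(\phi(x)=b^{-1}xb\) by
\[
\phi(a)=a^{1+p},\qquad \phi(c)=c .
\]
Then \(\phi^r(a)=a^{(1+p)^r}\) and \((1+p)^p\equiv1\pmod{p^2}\), so \(\phi^p=\mathrm{id}_A\). Moreover \(\phi\ne\mathrm{id}_A\) because \(a^p\neq e\), so \(|\phi|=p\).

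Finally, we split off the central factor. We have \(A=\langle a\rangle\times\langle c\rangle\), and \(\langle a\rangle\) is \(B\)-invariant, so \(H=\langle a\rangle\rtimes\langle b\rangle\cong C_{p^2}\rtimes C_p\) is a subgroup of order \(p^3\). The subgroup \(\langle c\rangle\) is central, and \(\langle c\rangle\cap H=\{e\}\): otherwise \(c\in H\), and then \(|F_5|=|H\langle c\rangle|=p^3\), a contradiction. Hence \(F_5=\langle c\rangle\times H\cong C_p\times(C_{p^2}\rtimes C_p)\). The only delicate point is this triviality of \(\langle c\rangle\cap H\), equivalently that the given presentation really has order \(p^4\). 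We take that from the hypothesis of the statement and the classification in \cite{Hasanat}, exactly as was done for the earlier families.
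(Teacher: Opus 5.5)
Your argument is correct and follows the paper's proof essentially step for step: the same $A=\langle a,c\rangle$ and $B=\langle b\rangle$, the order count for $A\cap B$, the order-$p$ automorphism $\phi$, and splitting off the central factor as $\langle c\rangle\times(\langle a\rangle\rtimes\langle b\rangle)$; your order-counting justifications of $|A|=p^3$ and of $\langle c\rangle\cap H=\{e\}$ fill in points the paper simply asserts. One small slip: with $\phi(x)=b^{-1}xb$ you had correctly derived $\phi(a)=a^{1-p}$, as the paper does, so switching to $\phi(a)=a^{1+p}$ (which is conjugation $x\mapsto bxb^{-1}$) is inconsistent, though harmless since $(1\pm p)^p\equiv 1\pmod{p^2}$ and $1\pm p\not\equiv 1\pmod{p^2}$.
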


\begin{proof}
By the defining relations, we have
\[
a^{p^2}=b^p=c^p=e,
\qquad [a,c]=[b,c]=e.
\]
Thus \(c\in Z(F_5)\), and consequently
\[
A=\langle a,c\rangle
\]
is an abelian subgroup of \(F_5\).

Since \(a\) has order \(p^2\), \(c\) has order \(p\),
and \(c\notin\langle a\rangle\), we obtain
\[
A\cong C_{p^2}\times C_p,
\qquad |A|=p^3.
\]

Using the commutator relation
\[
[b,a]=b^{-1}a^{-1}ba=a^p,
\]
we obtain
\[
b^{-1}a^{-1}ba=a^p.
\]
Multiplying on the left by \(b\), we get
\[
a^{-1}ba=ba^p.
\]
Equivalently,
\[
b^{-1}ab=a^{1-p}.
\]
Since \(a^{p^2}=e\), conjugation by \(b^{-1}\) maps \(a\)
to a power of \(a\). Moreover, \(b^{-1}cb=c\).
Consequently,
\[
b^{-1}Ab=A,
\]
so \(A\trianglelefteq F_5\).

Now let \(B=\langle b\rangle\). Since \(b^p=e\) and
\(b\ne e\), we obtain
\[
B\cong C_p.
\]
As \(F_5=\langle a,b,c\rangle\), we have
\[
F_5=AB.
\]
Moreover,
\[
|A\cap B|
=\frac{|A||B|}{|F_5|}
=\frac{p^3\cdot p}{p^4}
=1.
\]
Therefore,
\[
A\cap B=\{e\}.
\]
Since \(A\trianglelefteq F_5\), it follows that
\[
F_5=A\rtimes B.
\]

It remains to determine the action of \(B\) on \(A\).
Conjugation by \(b^{-1}\) induces the automorphism
\[
\phi:A\longrightarrow A,\qquad
\phi(x)=b^{-1}xb.
\]
From the preceding relations,
\[
\phi(a)=a^{1-p},
\qquad
\phi(c)=c.
\]
Therefore, for all integers \(i,j\),
\[
\phi(a^ic^j)=a^{i(1-p)}c^j.
\]

More generally, for every integer \(r\geq 0\),
\[
\phi^r(a)=a^{(1-p)^r},
\qquad
\phi^r(c)=c.
\]
Since \(p\) is an odd prime, the binomial theorem gives
\[
(1-p)^p\equiv 1\pmod{p^2}.
\]
Thus
\[
\phi^p(a)=a,
\qquad
\phi^p(c)=c,
\]
and hence
\[
\phi^p=\operatorname{id}_A.
\]

Furthermore,
\[
\phi(a)=a^{1-p}\ne a,
\]
because \(a^p\ne e\). Therefore,
\[
\phi\ne\operatorname{id}_A.
\]
Since \(p\) is prime and \(\phi^p=\operatorname{id}_A\),
we conclude that
\[
|\phi|=p.
\]

Finally, since \(c\) commutes with both \(a\) and \(b\),
the subgroup \(\langle c\rangle\) is central in \(F_5\).
Also,
\[
F_5=\langle c\rangle
\times\bigl(\langle a\rangle\rtimes\langle b\rangle\bigr).
\]
Indeed,
\[
\langle a\rangle\cap\langle c\rangle=\{e\},
\qquad
\langle c\rangle\cap
\bigl(\langle a\rangle\rtimes\langle b\rangle\bigr)
=\{e\}.
\]
Hence
\[
F_5\cong C_p\times(C_{p^2}\rtimes C_p).
\]
This completes the proof.
\end{proof}
\begin{proposition}
Let \(p\) be an odd prime, and let
\[
F_6=
\left\langle
a,b,c
\;\middle|\;
a^{p^2}=b^p=c^p=e,\,
[a,b]=[a,c]=e,\,
[c,b]=a^p
\right\rangle
\]
be a non-abelian group of order \(p^4\), then \(F_6\) is an internal semidirect product
\[
F_6=A\rtimes B,
\]
where
\[
A=\langle a,c\rangle
\cong C_{p^2}\times C_p,
\qquad
B=\langle b\rangle\cong C_p.
\]
\end{proposition}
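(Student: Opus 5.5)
I will follow the same pattern as the proofs of the preceding propositions for $F_1,\dots,F_5$. From the defining relations, $a$ commutes with both $b$ and $c$, so $a\in Z(F_6)$. In particular $A=\langle a,c\rangle$ is abelian. I will assume, as was done for $F_1$ and $F_5$, that $a$ has order $p^2$, $c$ has order $p$ and $c\notin\langle a\rangle$ (this is implicit in $|F_6|=p^4$). Then $A=\langle a\rangle\times\langle c\rangle\cong C_{p^2}\times C_p$ and $|A|=p^3$.

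Next I will rewrite the commutator relation as a conjugation rule. From $[c,b]=c^{-1}b^{-1}cb=a^p$ we get $b^{-1}cb=ca^p$, using that $a^p$ is central. Together with $b^{-1}ab=a$, this shows $b^{-1}Ab=A$, so $A\trianglelefteq F_6$. Put $B=\langle b\rangle\cong C_p$. Since $F_6=\langle a,b,c\rangle$ and $A$ is normal, we have $F_6=AB$. The order count $|A\cap B|=|A||B|/|F_6|=p^3\cdot p/p^4=1$ then gives $F_6=A\rtimes B$.

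Finally I will describe the action $\phi(x)=b^{-1}xb$ on $A$. We have
\[
\phi(a)=a,\qquad \phi(c)=ca^p,\qquad \phi(a^ic^j)=a^{i+pj}c^j ,
\]
and by induction $\phi^r(c)=ca^{pr}$. Since $a^{p^2}=e$, this gives $\phi^p=\operatorname{id}_A$. Because $a^p\neq e$, $\phi$ is not the identity, so $|\phi|=p$ and $F_6\cong (C_{p^2}\times C_p)\rtimes_\phi C_p$.

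The only delicate point is the order argument: that $a$ really has order $p^2$ and that $\langle a\rangle\cap\langle c\rangle=\{e\}$. The cleanest way to justify this is to construct the external semidirect product $(C_{p^2}\times C_p)\rtimes_\phi C_p$ with exactly this $\phi$. Its order is $p^4$ and it satisfies the defining relations, so $F_6$ maps onto it. Since $F_6$ has order $p^4$, this surjection is an isomorphism, and the orders of $a$ and $c$ and the trivial intersection are as claimed.
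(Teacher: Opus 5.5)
Your proof is correct and follows the paper's argument essentially step for step: $a$ is central, $b^{-1}cb=ca^p$ gives $A\trianglelefteq F_6$, the order count shows $A\cap B=\{e\}$, and $\phi^r(c)=ca^{rp}$ shows $|\phi|=p$. Your final paragraph goes beyond the paper, which only asserts that $a$ has order $p^2$, that $b\neq e$, and that $\langle a\rangle\cap\langle c\rangle=\{e\}$; you justify these by realising the relations in the external semidirect product $(C_{p^2}\times C_p)\rtimes_\phi C_p$ of order $p^4$.
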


\begin{proof}
By the defining relations, we have
\[
a^{p^2}=b^p=c^p=e,
\qquad [a,b]=[a,c]=e.
\]
Thus \(a\) commutes with both \(b\) and \(c\), and consequently
\[
A=\langle a,c\rangle
\]
is an abelian subgroup of \(F_6\).

Since \(a\) has order \(p^2\), \(c\) has order \(p\),
and \(c\notin\langle a\rangle\), we obtain
\[
A\cong C_{p^2}\times C_p,
\qquad |A|=p^3.
\]

Using the commutator relation
\[
[c,b]=c^{-1}b^{-1}cb=a^p,
\]
we obtain
\[
c^{-1}b^{-1}cb=a^p.
\]
Multiplying on the left by \(c\), we get
\[
b^{-1}cb=ca^p.
\]
Since \(a\) is central in \(F_6\), conjugation by \(b^{-1}\)
maps \(c\) to \(ca^p\). Furthermore,
\[
b^{-1}ab=a.
\]
Therefore,
\[
b^{-1}Ab=A,
\]
and hence
\[
A\trianglelefteq F_6.
\]

Now let \(B=\langle b\rangle\). Since \(b^p=e\) and
\(b\ne e\), we obtain
\[
B\cong C_p.
\]
As \(F_6=\langle a,b,c\rangle\), we have
\[
F_6=AB.
\]
Moreover,
\[
|A\cap B|
=\frac{|A||B|}{|F_6|}
=\frac{p^3\cdot p}{p^4}
=1.
\]
Therefore,
\[
A\cap B=\{e\}.
\]
Since \(A\trianglelefteq F_6\), it follows that
\[
F_6=A\rtimes B.
\]

It remains to determine the action of \(B\) on \(A\).
Conjugation by \(b^{-1}\) induces the automorphism
\[
\phi:A\longrightarrow A,\qquad
\phi(x)=b^{-1}xb.
\]
From the preceding relations,
\[
\phi(a)=a,\qquad
\phi(c)=ca^p.
\]
Since \(a\) and \(c\) commute, for all integers \(i,j\),
\[
\phi(a^ic^j)=a^{i+pj}c^j.
\]

More generally, for every integer \(r\geq 0\),
\[
\phi^r(a)=a,\qquad
\phi^r(c)=ca^{rp}.
\]
Since \(a^{p^2}=e\), we obtain
\[
\phi^p(c)=ca^{p^2}=c,
\]
and consequently
\[
\phi^p=\operatorname{id}_A.
\]

Furthermore,
\[
\phi(c)=ca^p\ne c,
\]
because \(a^p\ne e\). Thus
\[
\phi\ne\operatorname{id}_A.
\]
Since \(p\) is prime and \(\phi^p=\operatorname{id}_A\),
we conclude that
\[
|\phi|=p.
\]

Therefore, the semidirect product is determined by the action
\[
\phi(a)=a,\qquad
\phi(c)=ca^p.
\]
Hence
\[
F_6\cong
(C_{p^2}\times C_p)\rtimes C_p.
\]
This completes the proof.
\end{proof}
\begin{proposition}
Let \(p\) be an odd prime, and let
\[
F_7=
\left\langle
a,b,c
\;\middle|\;
a^p=b^p=c^p=e,\,
[a,c]^p=[b,c]=e,\,
[a,[a,c]]=[b,[a,c]]=e
\right\rangle
\]
be a non-abelian group of order \(p^4\), and suppose that \(c=[a,b]\). Then \(F_7\) is an internal semidirect product
\[
F_7=A\rtimes B,
\]
where
\[
A=\langle a,c,[a,c]\rangle
\cong C_p\times C_p\times C_p,
\qquad
B=\langle b\rangle\cong C_p.
\]
\end{proposition}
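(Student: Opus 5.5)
We follow the template of the earlier propositions in this section. Write $d=[a,c]$. \textbf{First}, from the defining relations we record
\[
a^p=b^p=c^p=d^p=e,\qquad [b,c]=e,\qquad [a,d]=[b,d]=e .
\]
Since $c$ commutes with $d$ and $c=[a,b]$, the element $d$ commutes with $a$, $b$ and $c$. Hence $d\in Z(F_7)$.

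\textbf{Second}, we show $A\trianglelefteq F_7$. From $c=[a,b]=a^{-1}b^{-1}ab$ we get $b^{-1}ab=ac$. Also $b^{-1}cb=c$ because $[b,c]=e$, and $b^{-1}db=d$ because $d$ is central. So conjugation by $b$ maps each generator of $A=\langle a,c,d\rangle$ into $A$. Since $F_7=\langle a,b\rangle=\langle a,b,c,d\rangle$, the subgroup $A$ is normalised by all generators and is therefore normal.

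\textbf{Third}, we establish the product structure. Here $F_7=AB$ and $B=\langle b\rangle\cong C_p$. If $|A|=p^3$, the order formula $|A\cap B|=|A||B|/|F_7|$ gives $A\cap B=\{e\}$, so $F_7=A\rtimes B$. The action of $B$ is the automorphism $\phi(x)=b^{-1}xb$ with
\[
\phi(a)=ac,\qquad \phi(c)=c,\qquad \phi(d)=d .
\]
By induction $\phi^r(a)=ac^r$, since $c$ is fixed by $\phi$. Therefore $\phi^p=\mathrm{id}_A$ and $\phi\neq\mathrm{id}_A$, so $|\phi|=p$, exactly as in the proofs for $F_1$ and $F_4$.

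\textbf{Main obstacle: the isomorphism type of $A$.} The order count $|A|=p^3$ follows from $|F_7|=p^4$ and $b\notin A$. The latter holds because $F_7/A$ is generated by the image of $b$, and $A\ne F_7$ since $F_7$ has order $p^4$ while $A$ is generated by three elements of order $p$ whose commutators lie in $\langle d\rangle$.

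The claim $A\cong C_p\times C_p\times C_p$, however, requires $A$ to be abelian, and the only generating commutator that could fail to vanish is $[a,c]=d$. I would first try to deduce $d=e$ from the presentation, using the Hall--Witt identity on $a,b,c$ together with $[b,c]=[b,d]=e$. If that succeeds, then $a,c,d$ pairwise commute and have exponent $p$, so $A\cong C_p^3$ as claimed; indeed $d=e$ would collapse $A$ to $\langle a,c\rangle$.

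If instead $d\neq e$ (which seems to be the intended situation, given that $[a,[a,c]]=e$ is imposed), then $A$ is not abelian. It is the extraspecial group of order $p^3$ and exponent $p$, generated by $a$ and $c$ with central commutator $d$. In that case the decomposition $F_7=A\rtimes B$ with $|A|=p^3$ and $B\cong C_p$ still holds by the argument above, and the description of $A$ must be corrected to $(C_p\times C_p)\rtimes C_p$ rather than $C_p^3$. I expect this identification of $A$ to be the real sticking point; the normality and complement steps are routine.
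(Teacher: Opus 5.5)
You have located the critical step, but you leave it unresolved, and your first proposed route cannot work. It is impossible to deduce $d=[a,c]=e$ from the presentation. Your conditional ``if that succeeds, then $A\cong C_p\times C_p\times C_p$'' also contradicts your own count. You have already shown $|A|=p^3$, whereas $d=e$ would make $A=\langle a,c\rangle$ abelian on two generators of order $p$, so $|A|\le p^2$. Equivalently, $c$ would be central and $F_7/\langle c\rangle$ abelian on two generators of order $p$, forcing $|F_7|\le p^3$. So the hypothesis $|F_7|=p^4$ forces $d\neq e$. Then $A=\langle a,c,d\rangle$ is non-abelian: it is the extraspecial group of order $p^3$ and exponent $p$. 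The asserted isomorphism $A\cong C_p\times C_p\times C_p$ is therefore false, and no argument can prove it. The hypothesis is consistent: on $N=\langle b,c,d\rangle\cong C_p^3$ let $a$ act by $a^{-1}ba=bc^{-1}$, $a^{-1}ca=cd^{-1}$, $a^{-1}da=d$. This is a unipotent map whose $p$-th power is the identity, since $\binom{p}{2}\equiv 0 \pmod p$ for odd $p$. Then $N\rtimes\langle a\rangle$ has order $p^4$ and satisfies every relation, with $[a,b]=c$ and $[a,c]=d\neq e$. This is exactly the model the paper itself uses for $F_7$ in Section 4.

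The paper's proof fails at the same point. It checks only that $a$ and $c$ commute with $d$, then concludes ``Hence $A$ is abelian'' without examining $[a,c]$, which is $d$ itself. It then simply assumes $a,c,d$ are independent, which is incompatible with $a$ and $c$ commuting. Your normality, complement and action steps are correct, and once you commit to $d\neq e$ they prove the corrected statement: $F_7=A\rtimes B$ with $A=\langle a,c,d\rangle$ extraspecial of order $p^3$ and $B=\langle b\rangle$ acting by $a\mapsto ac$, $c\mapsto c$, $d\mapsto d$. If an elementary abelian normal subgroup of index $p$ is wanted, take $\langle b,c,d\rangle$ (here $[b,c]=[b,d]=[c,d]=e$), complemented by $\langle a\rangle$; this gives $F_7\cong C_p^3\rtimes C_p$ as in the section heading. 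So drop the Hall--Witt branch and state the correction explicitly. A minor point: $d$ is central because it commutes with $a$ and $b$ and $F_7=\langle a,b\rangle$, not because $c$ commutes with $d$.
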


\begin{proof}
Let $d=[a,c]$, by the defining relations, we have
\[
a^p=b^p=c^p=d^p=e,
\qquad [b,c]=e,
\]
and
\[
[a,d]=[b,d]=e.
\]
Thus \(d\) commutes with both \(a\) and \(b\). Since \(c\)
also commutes with \(b\), we obtain
\[
A=\langle a,c,d\rangle.
\]

Because \([a,d]=e\), the elements \(a\) and \(d\) commute.
Moreover, \(c\) and \(d\) commute because \(d=[a,c]\) and
\([a,d]=e\). Hence \(A\) is abelian.

Assuming that \(a,c,d\) are independent elements of order \(p\),
we obtain
\[
A\cong C_p\times C_p\times C_p,
\qquad |A|=p^3.
\]

Since \(c=[a,b]\), the commutator relation gives
\[
a^{-1}b^{-1}ab=c.
\]
Consequently,
\[
b^{-1}ab=ac.
\]
Furthermore, since \([b,c]=e\) and \([b,d]=e\), we have
\[
b^{-1}cb=c,
\qquad
b^{-1}db=d.
\]
It follows that
\[
b^{-1}Ab=A,
\]
so \(A\trianglelefteq F_7\).

Now let \(B=\langle b\rangle\). Since \(b^p=e\) and
\(b\ne e\), we obtain
\[
B\cong C_p.
\]
As \(F_7=\langle a,b,c\rangle\), we have
\[
F_7=AB.
\]
Moreover,
\[
|A\cap B|
=\frac{|A||B|}{|F_7|}
=\frac{p^3\cdot p}{p^4}
=1.
\]
Therefore,
\[
A\cap B=\{e\}.
\]
Since \(A\trianglelefteq F_7\), it follows that
\[
F_7=A\rtimes B.
\]

It remains to determine the action of \(B\) on \(A\).
Conjugation by \(b^{-1}\) induces the automorphism
\[
\phi:A\longrightarrow A,\qquad
\phi(x)=b^{-1}xb.
\]
From the preceding relations,
\[
\phi(a)=ac,\qquad
\phi(c)=c,\qquad
\phi(d)=d.
\]
Since \(a,c,d\) commute pairwise, for all integers \(i,j,k\),
\[
\phi(a^ic^jd^k)=a^ic^{i+j}d^k.
\]

More generally, for every integer \(r\geq 0\),
\[
\phi^r(a)=ac^r,\qquad
\phi^r(c)=c,\qquad
\phi^r(d)=d.
\]
Since \(c^p=e\), we obtain
\[
\phi^p(a)=ac^p=a,
\]
and consequently
\[
\phi^p=\operatorname{id}_A.
\]

If \(c\ne e\), then
\[
\phi(a)=ac\ne a,
\]
so \(\phi\ne\operatorname{id}_A\). Since \(p\) is prime,
we conclude that
\[
|\phi|=p.
\]

Therefore, the semidirect product is determined by the action
\[
\phi(a)=ac,\qquad
\phi(c)=c,\qquad
\phi(d)=d.
\]
Hence
\[
F_7\cong(C_p\times C_p\times C_p)\rtimes C_p.
\]
This completes the proof.
\end{proof}
\begin{proposition}
Let \(p\) be an odd prime, and let
\[
F_8=
\left\langle
a,b
\;\middle|\;
a^{p^2}=b^p=e,\,
[a,b]^p=[b,[a,b]]=e,\,
[a,[a,b]]=a^p
\right\rangle
\]
be a non-abelian group of order \(p^4\), then \(F_8\) is an internal semidirect product
\[
F_8=A\rtimes B,
\]
where
\[
A=\langle a,[a,b]\rangle
\cong C_{p^2}\rtimes C_p,
\qquad
B=\langle b\rangle\cong C_p.
\]
Consequently,
\[
F_8\cong (C_{p^2}\rtimes C_p)\rtimes C_p.
\]
\end{proposition}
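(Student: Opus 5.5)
Set \(c=[a,b]\). The plan mirrors the proofs of the preceding propositions. First I would rewrite the defining relations in terms of \(a,b,c\):
\[
a^{p^2}=b^p=c^p=e,\qquad [b,c]=e,\qquad [a,c]=a^p .
\]
Since \([a,c]=a^{-1}c^{-1}ac=a^p\), we get \(c^{-1}ac=a^{p+1}\). Because \(\gcd(p+1,p^2)=1\), conjugation by \(c\) preserves \(\langle a\rangle\), so \(\langle a\rangle\trianglelefteq A:=\langle a,c\rangle\) and \(A=\langle a\rangle\langle c\rangle\).

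Next I would check that \(c\notin\langle a\rangle\). If \(c\) were a power of \(a\), it would commute with \(a\), forcing \(a^p=e\), which contradicts the order of \(a\). Since \(c\) has order \(p\), this gives \(\langle a\rangle\cap\langle c\rangle=\{e\}\), hence \(|A|=p^3\). Moreover, \(A\cong C_{p^2}\rtimes C_p\) with action \(a\mapsto a^{p+1}\), exactly as in the computation for \(F_2\), using \((p+1)^p\equiv1\pmod{p^2}\).

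For normality of \(A\), conjugation by \(b\) gives \(b^{-1}ab=ac\), from the definition of \(c\), and \(b^{-1}cb=c\), from \([b,c]=e\). Thus \(b^{-1}Ab\subseteq A\). Since \(F_8=\langle a,b\rangle\), the subgroup \(A\) is normal and \(F_8=AB\) with \(B=\langle b\rangle\cong C_p\). The order formula gives
\[
|A\cap B|=\frac{p^3\cdot p}{p^4}=1,
\]
so \(F_8=A\rtimes B\).

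I would then record the action \(\phi(x)=b^{-1}xb\), given by \(\phi(a)=ac\) and \(\phi(c)=c\). I would verify \(\phi^p=\mathrm{id}\) by computing \(\phi^r(a)=ac^r\), which uses that \(c\) is fixed by \(\phi\). Nontriviality of \(\phi\) follows from \(c\ne e\).

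The main obstacle is the orders: showing that \(a\) really has order \(p^2\), that \(c\) has order exactly \(p\), and that \(c\neq e\) and \(c\notin\langle a\rangle\). Equivalently, this is showing the presentation does not collapse and that \(|F_8|=p^4\). I would take \(|F_8|=p^4\) from the classification \cite{Hasanat}, as the statement does. Given that, \(|A|\le p^3\) and \(|F_8|=|A||B|/|A\cap B|\) force \(|A|=p^3\), \(|B|=p\) and trivial intersection, so all the order claims follow from counting. Alternatively, one could build the external semidirect product \((C_{p^2}\rtimes C_p)\rtimes C_p\) explicitly. One then checks that \(a\mapsto ac,\ c\mapsto c\) defines an automorphism of order \(p\) of \(C_{p^2}\rtimes C_p\) (it respects \(c^{-1}ac=a^{p+1}\) since \(c\) is fixed), and verifies the defining relations there.
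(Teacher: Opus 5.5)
Your proposal is correct and follows the paper's proof essentially step by step: the same $A=\langle a,c\rangle$, normality via $b^{-1}ab=ac$ and $b^{-1}cb=c$, the order count $|A\cap B|=1$, and the automorphism $\phi$ with $\phi^r(a)=ac^r$; in addition, your counting argument derives $|A|=p^3$, $a$ of order $p^2$, $c$ of order $p$ and $c\notin\langle a\rangle$ from $|F_8|=p^4$, which justifies facts the paper merely asserts. One small caution about your optional alternative: that $a\mapsto ac$, $c\mapsto c$ respects $c^{-1}ac=a^{p+1}$ does not follow just from $c$ being fixed, since it amounts to $(ac)^{p+1}=a^{p+1}c$, i.e.\ $(ac)^p=a^p$, which holds because $A$ has class $2$ with $A'=\langle a^p\rangle$ of order $p$ and $p\mid\binom{p}{2}$.
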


\begin{proof}
Let $c=[a,b]$, by the defining relations, we have
\[
a^{p^2}=b^p=c^p=e,
\qquad [b,c]=e,
\qquad [a,c]=a^p.
\]

Since \([b,c]=e\), the elements \(b\) and \(c\) commute.
Moreover, the relation \([a,c]=a^p\) gives
\[
a^{-1}c^{-1}ac=a^p,
\]
 we obtain
\[
a^{-1}ca=ca^{-p}.
\]
Thus conjugation by \(a^{-1}\) maps \(c\) to \(ca^{-p}\). Since \(a^{p^2}=e\) and \(c^p=e\), and \(c\notin\langle a\rangle\),
we obtain
\[
A\cong C_{p^2}\rtimes C_p
\qquad |A|=p^3,
\]
where the action is determined by
\[
a^{-1}ca=ca^{-p}.
\]

Since \(c=[a,b]\), we have
\[
a^{-1}b^{-1}ab=c.
\]
Consequently,
\[
b^{-1}ab=ac.
\]
Because \(b\) commutes with \(c\), conjugation by \(b^{-1}\)
maps \(a\) to \(ac\) and fixes \(c\). Therefore,
\[
b^{-1}Ab=A,
\]
this implies $A\trianglelefteq F_8$.

Now, let \(B=\langle b\rangle\cong C_p\). Since
\(F_8=\langle a,b\rangle\), we have
\[
F_8=AB.
\]
Moreover,
\[
|A\cap B|
=\frac{|A||B|}{|F_8|}
=\frac{p^3\cdot p}{p^4}
=1.
\]
Thus
\[
A\cap B=\{e\}.
\]
Since \(A\trianglelefteq F_8\), it follows that
\[
F_8=A\rtimes B.
\]

It remains to determine the action of \(B\) on \(A\).
Conjugation by \(b^{-1}\) induces the automorphism
\[
\phi:A\longrightarrow A,\qquad
\phi(x)=b^{-1}xb.
\]
From the preceding relations,
\[
\phi(a)=ac,\qquad
\phi(c)=c.
\]
Consequently, for all integers \(i,j\),
\[
\phi(a^ic^j)=(ac)^ic^j.
\]

More generally, for every integer \(r\geq 0\),
\[
\phi^r(a)=ac^r,\qquad
\phi^r(c)=c.
\]
Since \(c^p=e\), we obtain
\[
\phi^p(a)=ac^p=a,
\]
and hence
\[
\phi^p=\operatorname{id}_A.
\]
If \(c\ne e\), then \(\phi\) is nontrivial, so
\[
|\phi|=p.
\]

Therefore, the semidirect product is determined by the action
\[
\phi(a)=ac,\qquad
\phi(c)=c.
\]
Hence
\[
F_8\cong (C_{p^2}\rtimes C_p)\rtimes C_p.
\]
This completes the proof.
\end{proof}
\begin{proposition}
Let \(p\) be an odd prime, and let
\[
F_9=
\left\langle
a,b
\;\middle|\;
a^{p^2}=b^p=e,\,
[a,b]^p=[a,[a,b]]=e,\,
[b,[a,b]]=a^p
\right\rangle
\]
be a non-abelian group of order \(p^4\), then \(F_9\) is an internal semidirect product
\[
F_9=A\rtimes B,
\]
where
\[
A=\langle a,c\rangle
\cong C_{p^2}\times C_p,
\qquad
B=\langle b\rangle\cong C_p,
\]
with \(c=[a,b]\).
Consequently,
\[
F_9\cong (C_{p^2}\times C_p)\rtimes C_p.
\]
\end{proposition}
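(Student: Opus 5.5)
The plan is to follow the same template used for $F_1$, $F_6$ and $F_8$ above. We put $c=[a,b]$, show that $A=\langle a,c\rangle$ is an abelian normal subgroup of order $p^3$ complemented by $B=\langle b\rangle$, and then compute the conjugation action of $b$ on $A$ and check that it has order $p$.

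\textbf{Step 1: the subgroup $A$ is abelian.} The relations $[a,[a,b]]=e$ and $[a,b]^p=e$ give $[a,c]=e$ and $c^p=e$. So $a$ and $c$ commute and $A=\langle a,c\rangle$ is abelian, with exponent dividing $p^2$.

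\textbf{Step 2: the conjugation action of $b$.} From $c=[a,b]=a^{-1}b^{-1}ab$ we get $b^{-1}ab=ac$. From $[b,c]=b^{-1}c^{-1}bc=a^p$ we get $b^{-1}c^{-1}b=a^pc^{-1}$. Inverting, and using that $a$ and $c$ commute, gives $b^{-1}cb=ca^{-p}$. Hence $b^{-1}Ab\subseteq A$, and $A\trianglelefteq F_9$ because $F_9=\langle a,b\rangle$. Then $F_9=AB$, since $A$ is normal and $a\in A$, $b\in B$.

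\textbf{Step 3: orders and the complement.} Here we avoid simply assuming $c\notin\langle a\rangle$, which is the one point that is not immediate. Since $A$ is abelian, generated by an element of order dividing $p^2$ and one of order dividing $p$, we have $|A|\le p^3$. Also $|B|\le p$. Since $|F_9|=p^4$ and
\[
p^4=|AB|=\frac{|A||B|}{|A\cap B|}\le |A||B|\le p^4,
\]
every inequality is an equality. Thus $|A|=p^3$, $|B|=p$ and $A\cap B=\{e\}$. This forces $a$ to have order $p^2$, $c$ to have order $p$ with $c\notin\langle a\rangle$, and $A\cong C_{p^2}\times C_p$. So $F_9=A\rtimes B$.

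\textbf{Step 4: the automorphism has order $p$.} Let $\phi(x)=b^{-1}xb$, so $\phi(a)=ac$ and $\phi(c)=ca^{-p}$.

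First, $\phi(a^p)=(ac)^p=a^pc^p=a^p$, so $a^p$ is fixed by $\phi$.

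By induction on $r$ we then obtain
\[
\phi^r(c)=ca^{-rp},\qquad \phi^r(a)=ac^{r}a^{-p\binom{r}{2}}.
\]

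Set $r=p$. We have $c^p=e$, and $a^{-p\cdot p(p-1)/2}=e$ because $p$ is odd, so $\tfrac{p-1}{2}$ is an integer and $p^2\mid p\binom{p}{2}$. Hence $\phi^p=\mathrm{id}_A$.

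Finally, $\phi(a)=ac\neq a$ because $c\neq e$. So $\phi$ has order exactly $p$, and therefore $F_9\cong (C_{p^2}\times C_p)\rtimes_\phi C_p$.

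The only genuinely delicate point is the binomial exponent in $\phi^p(a)$, which is where the hypothesis that $p$ is odd is used. The order count in Step 3 removes the need for the independence assumption that the earlier proofs make.
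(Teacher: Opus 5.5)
Your proof is correct and follows the paper's route: $A=\langle a,c\rangle$ is abelian and normal, $B=\langle b\rangle$ is a complement by the order formula, and $\phi(a)=ac$, $\phi(c)=ca^{-p}$ has order $p$. It is more careful than the paper in two places. Your order count in Step 3 proves $c\notin\langle a\rangle$, which the paper simply asserts, and your formula $\phi^r(a)=ac^ra^{-p\binom{r}{2}}$ is the correct one, whereas the paper's claim $\phi^r(a)=ac^r$ already fails at $r=2$ (where $\phi^2(a)=a^{1-p}c^2$), so your computation is where the oddness of $p$ is genuinely needed.
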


\begin{proof}
Let $c=[a,b]$, by the defining relations, we have
\[
a^{p^2}=b^p=c^p=e,
\qquad [a,c]=e,
\qquad [b,c]=a^p.
\]

Since \([a,c]=e\), the elements \(a\) and \(c\) commute.
Let
\[
A=\langle a,c\rangle,
\qquad B=\langle b\rangle.
\]
Since \(a\) has order \(p^2\), \(c\) has order \(p\), and
\(c\notin\langle a\rangle\), we obtain
\[
A\cong C_{p^2}\times C_p,
\qquad |A|=p^3.
\]
Also,
\[
B\cong C_p,
\qquad |B|=p.
\]

Using the commutator relation
\[
[a,b]=a^{-1}b^{-1}ab=c,
\]
we obtain
\[
b^{-1}ab=ac.
\]
Thus conjugation by \(b^{-1}\) maps \(a\) to \(ac\).

Furthermore, the relation
\[
[b,c]=b^{-1}c^{-1}bc=a^p
\]
gives
\[
c^{-1}bc=ba^p.
\]
Equivalently,
\[
b^{-1}cb=ca^{-p},
\]
where the last equality follows from \(a^{p^2}=e\).

Consequently,
\[
b^{-1}Ab=A,
\]
so \(A\trianglelefteq F_9\).

Since \(F_9=\langle a,b\rangle\), we have
\[
F_9=AB.
\]
Moreover,
\[
|A\cap B|
=\frac{|A||B|}{|F_9|}
=\frac{p^3\cdot p}{p^4}
=1.
\]
Therefore,
\[
A\cap B=\{e\}.
\]
It follows that
\[
F_9=A\rtimes B.
\]

It remains to determine the action of \(B\) on \(A\).
Conjugation by \(b^{-1}\) induces the automorphism
\[
\phi:A\longrightarrow A,\qquad
\phi(x)=b^{-1}xb.
\]
From the preceding relations,
\[
\phi(a)=ac,\qquad
\phi(c)=ca^{-p}.
\]

Since \(a\) and \(c\) commute, for all integers \(i,j\),
\[
\phi(a^ic^j)
=(ac)^i(ca^{-p})^j
=a^ic^{i+j}a^{-pj}.
\]

In particular,
\[
\phi^r(a)=ac^r
\]
for every integer \(r\geq 0\), and
\[
\phi^r(c)=ca^{-rp}.
\]
Since \(c^p=e\) and \(a^{p^2}=e\), we obtain
\[
\phi^p(a)=ac^p=a,
\qquad
\phi^p(c)=ca^{-p^2}=c.
\]
Hence
\[
\phi^p=\operatorname{id}_A.
\]

If \(c\ne e\), then
\[
\phi(a)=ac\ne a,
\]
so \(\phi\) is nontrivial. Since \(p\) is prime, it follows that
\[
|\phi|=p.
\]

Therefore, the semidirect product is determined by the action
\[
\phi(a)=ac,\qquad
\phi(c)=ca^{-p}.
\]
Hence
\[
F_9\cong (C_{p^2}\times C_p)\rtimes C_p.
\]
This completes the proof.
\end{proof}
\begin{proposition}
Let \(p\) be an odd prime, and let
\[
F_{10}=
\left\langle
a,b
\;\middle|\;
a^{p^2}=b^p=e,\,
[a,b]^p=[a,[a,b]]=e,\,
[b,[a,b]]=a^{2p}
\right\rangle
\]
be a non-abelian group of order \(p^4\), then \(F_{10}\) is an internal semidirect product
\[
F_{10}=A\rtimes B,
\]
where
\[
A=\langle a,c\rangle
\cong C_{p^2}\times C_p,
\qquad
B=\langle b\rangle\cong C_p,
\]
with \(c=[a,b]\). Consequently,
\[
F_{10}\cong (C_{p^2}\times C_p)\rtimes C_p.
\]
\end{proposition}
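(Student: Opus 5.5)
We follow the template of the proof given for $F_9$ essentially verbatim, replacing $a^p$ by $a^{2p}$ throughout. Put $c=[a,b]$. The defining relations then read
\[
a^{p^2}=b^p=c^p=e,\qquad [a,c]=e,\qquad [b,c]=a^{2p}.
\]
Since $[a,c]=e$, the subgroup $A=\langle a,c\rangle$ is abelian. As in the earlier propositions, we take from the classification in \cite{Hasanat} that $a$ has order $p^2$, $c$ has order $p$ and $c\notin\langle a\rangle$. This gives $A\cong C_{p^2}\times C_p$ with $|A|=p^3$, and $B=\langle b\rangle\cong C_p$.

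Next we compute the conjugation action of $b$ on the generators of $A$.
\begin{itemize}
\item From $a^{-1}b^{-1}ab=c$ we get $b^{-1}ab=ac$.
\item From $b^{-1}c^{-1}bc=a^{2p}$ we get $c^{-1}bc=ba^{2p}$.
\item The central element to exploit is $a^{2p}$. Since $a^p$ commutes with $a$ and with $c$, we still need $b^{-1}a^pb=(ac)^p=a^pc^p=a^p$, which holds because $a$ and $c$ commute and $c^p=e$. Hence $a^p\in Z(F_{10})$.
\item Inverting the second relation then gives $b^{-1}cb=ca^{-2p}$.
\end{itemize}
Both images lie in $A$, so $b^{-1}Ab=A$ and $A\trianglelefteq F_{10}$.

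Because $F_{10}=\langle a,b\rangle$, we have $F_{10}=AB$, and the order count $|A\cap B|=p^3\cdot p/p^4=1$ shows $A\cap B=\{e\}$. Therefore $F_{10}=A\rtimes B$.

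For the action, let $\phi(x)=b^{-1}xb$. Then
\[
\phi(a)=ac,\qquad \phi(c)=ca^{-2p}.
\]

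We expect the main obstacle to be checking that $\phi^p=\operatorname{id}_A$. The point is that $\phi(c)$ is no longer $c$, so $\phi^r(a)=ac^r$ is not automatic.
\begin{itemize}
\item By induction, using that $a^{p}$ is fixed by $\phi$, we get $\phi^r(c)=ca^{-2rp}$.
\item Then $\phi^r(a)=a\,c^r\,a^{-2p\binom{r}{2}}$.
\item At $r=p$ we have $c^p=e$, and $p\mid\binom{p}{2}$ because $p$ is odd. So $a^{-2p\binom p2}=e$, since the exponent is divisible by $p^2$, and therefore $\phi^p(a)=a$.
\item Also $\phi^p(c)=ca^{-2p^2}=c$.
\end{itemize}
Finally, $\phi\neq\operatorname{id}_A$ because $c\ne e$, so $|\phi|=p$. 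This yields $F_{10}\cong(C_{p^2}\times C_p)\rtimes_\phi C_p$ with the action $\phi(a)=ac$, $\phi(c)=ca^{-2p}$.

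The difference from $F_9$ is the exponent $2$. Since $p$ is odd, $a^{2p}$ generates $\langle a^p\rangle$, which is why this group is a genuinely different class from $F_9$. That distinction is irrelevant for the semidirect product structure itself.
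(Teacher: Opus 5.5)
Your proof is correct and follows the same route as the paper: normality of $A=\langle a,c\rangle$ from $b^{-1}ab=ac$ and $b^{-1}cb=ca^{-2p}$, the order count for $A\cap B$, then $\phi^p=\operatorname{id}_A$ with $\phi\neq\operatorname{id}_A$. In fact your formula $\phi^r(a)=ac^r a^{-2p\binom{r}{2}}$, together with your check that $a^p$ is central, is more careful than the paper's stated $\phi^r(a)=ac^r$, which ignores that $\phi(c)\neq c$. Your closing remark about $F_9$ plays no role in the proof, but its justification is a non sequitur and the claim is false in general: replacing $b$ by $b^j$ in $F_9$ turns $[b,c]=a^p$ into $[b^j,[a,b^j]]=a^{j^2p}$, so the two presentations give isomorphic groups whenever $2$ is a square mod $p$ (e.g.\ $p=7$).
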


\begin{proof}
Let $c=[a,b]$, by the defining relations, we have
\[
a^{p^2}=b^p=c^p=e,
\qquad [a,c]=e,
\qquad [b,c]=a^{2p}.
\]

Since \([a,c]=e\), the elements \(a\) and \(c\) commute.
Let
\[
A=\langle a,c\rangle,
\qquad B=\langle b\rangle.
\]
Since \(a\) has order \(p^2\), \(c\) has order \(p\), and
\(c\notin\langle a\rangle\), we obtain
\[
A\cong C_{p^2}\times C_p,
\qquad |A|=p^3.
\]
Also,
\[
B\cong C_p,
\qquad |B|=p.
\]

Using the commutator relation
\[
[a,b]=a^{-1}b^{-1}ab=c,
\]
we obtain
\[
b^{-1}ab=ac.
\]

Furthermore, the relation
\[
[b,c]=b^{-1}c^{-1}bc=a^{2p}
\]
gives
\[
b^{-1}cb=ca^{-2p}.
\]
Consequently,
\[
b^{-1}Ab=A,
\]
so \(A\trianglelefteq F_{10}\).

Since \(F_{10}=\langle a,b\rangle\), we have
\[
F_{10}=AB.
\]
Moreover,
\[
|A\cap B|
=\frac{|A||B|}{|F_{10}|}
=\frac{p^3\cdot p}{p^4}
=1.
\]
Therefore,
\[
A\cap B=\{e\}.
\]
It follows that
\[
F_{10}=A\rtimes B.
\]

It remains to determine the action of \(B\) on \(A\).
Conjugation by \(b^{-1}\) induces the automorphism
\[
\phi:A\longrightarrow A,\qquad
\phi(x)=b^{-1}xb.
\]
From the preceding relations,
\[
\phi(a)=ac,\qquad
\phi(c)=ca^{-2p}.
\]

Since \(a\) and \(c\) commute, for all integers \(i,j\),
\[
\phi(a^ic^j)
=(ac)^i(ca^{-2p})^j
=a^ic^{i+j}a^{-2pj}.
\]

More generally,
\[
\phi^r(a)=ac^r
\]
for every integer \(r\geq 0\). Also,
\[
\phi^r(c)=ca^{-2pr}.
\]
Since \(c^p=e\) and \(a^{p^2}=e\), we obtain
\[
\phi^p(a)=ac^p=a,
\qquad
\phi^p(c)=ca^{-2p^2}=c.
\]
Hence
\[
\phi^p=\operatorname{id}_A.
\]

If \(c\ne e\), then
\[
\phi(a)=ac\ne a,
\]
so \(\phi\) is nontrivial. Since \(p\) is prime, it follows that
\[
|\phi|=p.
\]

Therefore, the semidirect product is determined by the action
\[
\phi(a)=ac,\qquad
\phi(c)=ca^{-2p}.
\]
Hence
\[
F_{10}\cong (C_{p^2}\times C_p)\rtimes C_p.
\]
This completes the proof.
\end{proof}
\section{Primitive Central Idempotents and Irreducible Representations of Families of Groups}
\subsection{The group $F_1=(C_{p^2}\times C_p)\rtimes C_p$}
\hfill\\
Consider the long presentation of $F_1=\left\langle a,b \;\middle|\;a^{p^2}=b^p=[a,[a,b]]=[b,[a,b]]=[a,b]^p=e\right\rangle$, then the subnormal series
\[
\langle e\rangle =G_0 \triangleleft \langle c\rangle \cong C_p \triangleleft A=\langle a,c\rangle \cong C_{p^2}\times C_p \triangleleft F_1.
\]
Since \(c\) is central, every irreducible representation is obtained from the primitive central idempotents of the abelian normal subgroup via Clifford theory.
\subsubsection*{Primitive Central Idempotents of \(\mathbb{C}[A]\)}
Since $A\cong C_{p^2}\times C_p$ is abelian, all irreducible representations are one-dimensional. The corresponding primitive central idempotents of $\mathbb{C}[A]$ are given by
\[
e_{r,s}
=
\frac{1}{p^3}
\sum_{i=0}^{p^2-1}
\sum_{j=0}^{p-1}
\zeta^{-ri}\omega^{-sj}a^ic^j,
\]
where
\[
r=0,1,\dots,p^2-1,
\qquad
s=0,1,\dots,p-1.
\]
Since $ba=abc$ then the action of \(b\) on $A$, we obtain $b^{-1}ab=ac$, $b^{-1}cb=c$.
Thus conjugation by \(b\) acts on representation by
\[
\rho_{r,s}^b(a)
=
\rho_{r,s}(ac)
=
\zeta^r\omega^s,
\]
and
\[
\rho_{r,s}^b(c)=\omega^s.
\]

So the action on parameters is
\[
(r,s)\mapsto (r+sp,s).
\]
Hence, there are two orbit structure

\subsubsection*{Case I} If \(s=0\), then $(r,0)\mapsto (r,0)$, so the orbit has size \(1\). These representation extend to \(p\) distinct one-dimensional representations of \(F_1\).
The corresponding primitive central idempotents split further as
\[
e_{r,0,l}
=
e_{r,0}
\left(
\frac{1}{p}
\sum_{m=0}^{p-1}
\omega^{-lm}b^m
\right),
\quad
\text{for l}=0,1,\dots,p-1.
\]
\subsubsection*{Case II} If \(s\neq 0\), then the orbit is
\[
O(r,s)
=
\{
(r,s),
(r+sp,s),
(r+2sp,s),
\dots,
(r+(p-1)sp,s)
\}.
\]
The orbit sum
\[
e_{O(r,s)}
=
\sum_{t=0}^{p-1}
e_{r+tsp,s}
\]
is a primitive central idempotent of $\mathbb{C}[F_1]$.
\subsubsection*{One-Dimensional Irreducible Representations}

For \(s=0\), one dimensional irreducible representations are given as 
\[
\rho_{r,l}(a)=\zeta^r,
\qquad
\rho_{r,l}(b)=\omega^l,
\qquad
\rho_{r,l}(c)=1.
\]

\subsubsection*{\(p\)-Dimensional Irreducible Representations} Fix \(s\neq 0\). Let $\rho_s$ denote the irreducible representation associated with the orbit \(O(r,s)\). A model for the representation space is
\[
V_s=\operatorname{span}\{v_0,v_1,\dots,v_{p-1}\}.
\]
With respect to the ordered basis $\mathcal{B}=\{v_0,v_1,\dots,v_{p-1}\}$ generators act as follows,
\[
\rho_s(c)=\omega^s I_p,
\]
\[
\rho_s(b)=
\begin{pmatrix}
0 & 1 & 0 & \cdots & 0 \\
0 & 0 & 1 & \ddots & 0 \\
0 & 0 & 0 & \ddots & \vdots \\
\vdots & \vdots & \vdots & \ddots & 1 \\
1 & 0 & 0 & \cdots & 0
\end{pmatrix},
\] since $b^{-t}ab^t=ac^t$, we obtain
\[
\rho_s(a)v_t
=
\zeta^r\omega^{st}v_t.
\]
Therefore,
\[
\rho_s(a)
=
\operatorname{diag}
\left(
\zeta^r,
\zeta^r\omega^s,
\zeta^r\omega^{2s},
\dots,
\zeta^r\omega^{(p-1)s}
\right).
\]
These are all irreducible representations of $F_1$.
\subsection{The group $F_2=C_{p^2}\rtimes C_{p^2}$}
\hfill\\
Consider the long presentation of $F_2=\Big\langle a,b \Big|a^{p^2}=b^{p^2}=1, [b,a]=b^p\Big\rangle$, then the natural subnormal series

\[
\{e\}=G_0
<
A=\langle b\rangle
\cong C_{p^2}
<
F_2.
\]
\subsubsection*{Primitive Central Idempotents of \(\mathbb C[A]\)} 
Since $A$ is cyclic group of order $p^2$, then corresponding primitive central idempotens are given as 

\[
e_j
=
\frac1{p^2}
\sum_{m=0}^{p^2-1}
\zeta^{-jm}b^m,
\qquad
j=0,1,\ldots,p^2-1.
\]
The action of $a$ on the idempotents $a^{-1}e_ja=e_{kj}$. So $a$ acts on the indices via

\[
j\longmapsto kj
\pmod{p^2}.
\]
\subsubsection*{Orbit Structure}

The fixed points satisfy $kj\equiv j\pmod{p^2}$. Since $(k-1)j=pj$, we obtain $pj\equiv0\pmod{p^2}$ implies $p\mid j$. Then the fixed points are $0,p,2p,\ldots,(p-1)p$. For $p\nmid j$, $A_j=\{j,kj,k^2j,\ldots,k^{p-1}j\}$ is an orbit of size $p$.
\subsubsection*{Primitive Central Idempotents of \(\mathbb C[F_2]\)}

For each fixed orbit $\{rp\}$, $r=0,1,\ldots,p-1$, the idempotent $e_{rp}$ remains central in \(\mathbb C[F_2]\). Since $F_2/\langle b\rangle\cong C_{p^2}$, the primitive central idempotents of
\(\mathbb C[C_{p^2}]\) are

\[
f_t
=
\frac1{p^2}
\sum_{m=0}^{p^2-1}
\zeta^{-tm}a^m,
\qquad
t=0,\ldots,p^2-1.
\]
Therefore each fixed idempotent splits into $e_{rp}f_t$. For non-fixed orbits i.e. $p\nmid j$, $A_j=\{j,kj,k^2j,\ldots,k^{p-1}j\}$. Given the orbit sum
\[
E_j
=
\sum_{r=0}^{p-1}
e_{k^rj}.
\]
Then $E_j$ is a central idempotent. There are $p-1$ such orbit sums. However $E_j$ is not primitive. Indeed, $\operatorname{Stab}(e_j)=\langle a^p\rangle\cong C_p$. Therefore each $E_j$ splits into $p$ primitive central idempotents. Consequently the non-trivial orbits contribute $p(p-1)$ primitive central idempotents.
\subsubsection*{One-Dimensional Irreducible Representations}
Corresponding the fixed primitive central idempotents one dimensional irreducible representations are given as

\[
\rho_{r,s}(a)
=
\zeta_{p^2}^{\,r},
\qquad
\rho_{r,s}(b)
=
\zeta_{p^2}^{\,ps},
\]

where

\[
0\le r<p^2,
\qquad
0\le s<p.
\]

\subsubsection*{\(p\)-Dimensional Irreducible Representations}

Choose one representative \(j\) from each orbit. The corresponding representation space is

\[
V_j
=
\operatorname{span}
\{
e_j,
ae_j,
a^2e_j,
\dots,
a^{p-1}e_j
\}.
\]
Since $be_j=\zeta^{-j}e_j$, we obtain $b(a^re_j)=a^r b^{k^r}e_j=\zeta^{-k^rj}(a^re_j)$. So the matrix of $b$ is  

\[
\rho_j(b)
=
\operatorname{diag}
\Big(
\zeta^{-j},
\zeta^{-kj},
\zeta^{-k^2j},
\dots,
\zeta^{-k^{p-1}j}
\Big).
\]
For each $t=0,1,\ldots,p-1$, the matrix of $a$ is

\[
\rho_{j,t}(a)
=
\begin{pmatrix}
0&0&\cdots&0&\omega^t\\
1&0&\cdots&0&0\\
0&1&\cdots&0&0\\
\vdots&&\ddots&&\vdots\\
0&0&\cdots&1&0
\end{pmatrix},
\]
where $\omega=e^{2\pi i/p}$.
These are all irreducible representations obtained.
\subsection{The group $F_3=C_{p^3}\rtimes C_p$}
\hfill\\
Consider the long presentation group $F_3=\langle a,b \mid a^{p^3}=b^p=e,\; b^{-1}ab=a^{1+p^2}\rangle$, then the subnormal series
\[
\{e\}=G_0
<
A=\langle a\rangle\cong C_{p^3}
<
F_3.
\]
The primitive central idempotents of \(\mathbb{C}[A]\) are

\[
e_j
=
\frac{1}{p^3}
\sum_{m=0}^{p^3-1}
\zeta^{-jm}a^m,
\qquad
j=0,1,\ldots,p^3-1.
\]

Since $b^{-1}ab=a^k, \text{where k}=1+p^2$, we have $b^{-1}a^mb=a^{km}$. Then the action of $b$ on the idempotents
\[
b^{-1}e_jb=e_{kj}.
\]

Therefore \(b\) acts on the set of primitive idempotents via $j\longmapsto kj\pmod{p^3}$.
\subsubsection*{Orbit Structure}
The fixed points satisfy $kj\equiv j \pmod{p^3}$. Since \(k-1=p^2\), $p^2j\equiv 0 \pmod{p^3}$, which is equivalent to $p\mid j$. Thus the fixed points are $0,p,2p,\ldots,(p^2-1)p$.
For \(p\nmid j\), $A_j=\{j,kj,k^2j,\ldots,k^{p-1}j\}$ is an orbit of size \(p\).
\subsubsection*{Primitive Central Idempotents of \(\mathbb{C}[F_3]\)}

For each fixed orbit, the idempotent $e_{rp}$ remains central in \(\mathbb{C}[F_3]\). Since $F_3/\langle a\rangle\cong C_p$, the primitive central idempotents of \(\mathbb{C}[C_p]\) are

\[
f_t
=
\frac1p
\sum_{m=0}^{p-1}
\omega^{-tm}b^m,
\qquad
t=0,1,\ldots,p-1,
\]

where $\omega=e^{2\pi i/p}$. Hence each \(e_{rp}\) splits into $e_{rp}f_0,\,e_{rp}f_1,\,\dots,\,e_{rp}f_{p-1}$.\smallskip

For non-fixed orbit i.e., $p\nmid j$,  $A_j=\{j,kj,\ldots,k^{p-1}j\}$, the primitive central idempotents are $E_j=\sum_{r=0}^{p-1}e_{k^rj}$. Each \(E_j\) is a primitive central idempotent of $\mathbb{C}[F_3]$.

\subsubsection*{One-Dimensional Irreducible Representations}
The one dimensional irreducible representations are 
\[
\rho_{r,t}(a)
=
\zeta_{p^2}^{\,r},
\qquad
\rho_{r,t}(b)
=
\omega^t,
\]
where $r=0,1,\ldots,p^2-1$, $t=0,1,\ldots,p-1$.
\subsubsection*{\(p\)-Dimensional Irreducible Representations}
Let $A_j=\{j,kj,k^2j,\ldots,k^{p-1}j\},\qquad p\nmid j$, and choose one representative \(j\) from each orbit. The corresponding representation space is

\[
V_j
=
\operatorname{span}
\{
e_j,\,
be_j,\,
b^2e_j,\,
\dots,\,
b^{p-1}e_j
\}.
\]

Since $ae_j=\zeta^{-j}e_j$, we obtain

\[
a(b^re_j)
=
b^r a^{k^r}e_j
=
\zeta^{-k^rj}(b^re_j).
\]

Therefore,

\[
\rho_j(a)
=
\operatorname{diag}
\Big(
\zeta^{-j},
\zeta^{-kj},
\zeta^{-k^2j},
\dots,
\zeta^{-k^{p-1}j}
\Big).
\]

Moreover,

\[
\rho_j(b)
=
\begin{pmatrix}
0&0&0&\cdots&1\\
1&0&0&\cdots&0\\
0&1&0&\cdots&0\\
\vdots&&\ddots&&\vdots\\
0&0&\cdots&1&0
\end{pmatrix}.
\]
 Each orbit \(A_j\) gives a \(p\)-dimensional irreducible representation.\smallskip
These are all irreducible representations obtained.
\subsection{The group $F_4=C_p \times\left((C_p \times C_p)\rtimes C_p\right)$}
\hfill\\
Consider the long presentation of group $F_4=\langle a,b,c,d \mid a^p=b^p=c^p=d^p=e,\,[a,c]=[b,c]=[a,d]=[b,d]=[c,d]=e,\,[a,b]=d\rangle $. Since $d=[a,b]$ and all commutators involving $c$ and $d$ are trivial, we have

\[
Z(F_4)=\langle c,d\rangle \cong C_p\times C_p.
\]

Moreover,

\[
b^{-1}ab=ad,\qquad b^{-1}cb=c,\qquad b^{-1}db=d.
\]
Since $a,c,d$ commute and each has order $p$,

\[
A\cong C_p\times C_p\times C_p,
\]

and $A$ is a normal abelian subgroup of index $p$ in $F_4$, then the subnormal series

\[
\{e\}=G_0
<
\langle c,d\rangle
\cong C_p\times C_p
<
A=\langle a,c,d\rangle
\cong C_p^3
<
F_4.
\]

\subsubsection*{Primitive Central Idempotents of $\mathbb{C}[A]$}

Let $\omega$ be a primitive $p$th root of unity. Since $A$ is abelian, all one dimensional irreducible
representations are given as 
\[
\rho_{r,s,t}(a)=\omega^r,\qquad
\rho_{r,s,t}(c)=\omega^s,\qquad
\rho_{r,s,t}(d)=\omega^t,
\]
where $r,s,t\in\{0,1,\ldots,p-1\}$. The corresponding primitive central idempotents of $\mathbb{C}[A]$ are

\[
e_{r,s,t}
=
\frac{1}{p^3}
\sum_{i,j,k=0}^{p-1}
\omega^{-ri-sj-tk}
a^i c^j d^k .
\]
Since the action of $b$,
\[
b^{-1}ab=ad,
\]

we obtain

\[
\rho^{\,b}_{r,s,t}(a)
=
\rho_{r,s,t}(ad)
=
\omega^{r+t},
\]

and

\[
\rho^{\,b}_{r,s,t}(c)=\omega^s,
\qquad
\rho^{\,b}_{r,s,t}(d)=\omega^t.
\]

Hence conjugation by $b$ acts on the parameters by

\[
(r,s,t)\longmapsto (r+t,s,t).
\]

There are two orbit types.

\subsubsection*{Case I} If $t=0$,then $(r,s,0)\longmapsto (r,s,0)$, so every orbit has size one.
The corresponding primitive central idempotents split as

\[
e_{r,s,0,\ell}
=
e_{r,s,0}
\left(
\frac1p
\sum_{m=0}^{p-1}
\omega^{-\ell m}b^m
\right),
\]

where $\ell=0,1,\ldots,p-1$.

\subsubsection*{Case II} If $t\neq 0$, then $(r,s,t)\longmapsto(r+t,s,t)$, and the orbit of size $p$ is

\[
O(r,s,t)
=
\{
(r,s,t),
(r+t,s,t),
(r+2t,s,t),
\ldots,
(r+(p-1)t,s,t)
\}.
\]
The corresponding orbit sum

\[
e_{O(r,s,t)}
=
\sum_{j=0}^{p-1}
e_{r+jt,s,t}
\]

is a primitive central idempotent of $\mathbb{C}[F_4]$.

\subsubsection*{One-Dimensional Irreducible Representations}

For $t=0$, the one dimensional irreducible representations are given as 

\[
\rho_{r,s,\ell}(a)=\omega^r,
\qquad
\rho_{r,s,\ell}(b)=\omega^\ell,
\qquad
\rho_{r,s,\ell}(c)=\omega^s,
\qquad
\rho_{r,s,\ell}(d)=1.
\]

\subsubsection*{$p$-Dimensional Irreducible Representations}

For $t\neq0$. Let $\rho_{s,t}$ denote the irreducible representation associated with the orbit
$O(r,s,t)$. The corresponding representation space is

\[
V_{s,t}
=
\operatorname{span}\{v_0,v_1,\ldots,v_{p-1}\}.
\]

With respect to the ordered basis $B=\{v_0,v_1,\ldots,v_{p-1}\}$, generators act as follows

\[
\rho_{s,t}(c)=\omega^s I_p,
\qquad
\rho_{s,t}(d)=\omega^t I_p,
\]

and

\[
\rho_{s,t}(b)=
\begin{pmatrix}
0&1&0&\cdots&0\\
0&0&1&\cdots&0\\
\vdots& & &\ddots&\vdots\\
0&0&0&\cdots&1\\
1&0&0&\cdots&0
\end{pmatrix}.
\]

Since $b^{-j}ab=a d^j$, we obtain $\rho_{s,t}(a)v_j=\omega^{r+jt}v_j$.
Therefore,

\[
\rho_{s,t}(a)
=
\operatorname{diag}
\left(
\omega^r,
\omega^{r+t},
\omega^{r+2t},
\ldots,
\omega^{r+(p-1)t}
\right).
\]
These are all irreducible representations of $F_4$ obtained.
\subsection{The group $F_5=C_p \times(C_{p^2}\rtimes C_p)$}
\hfill\\
Consider the long presentation of group $F_5=\langle a,b,c\;\Big|\;a^{p^2}=b^p=c^p=e,\;[a,c]=[b,c]=e,\;[b,a]=a^p\rangle$. Let $d=a^p$ $d$ has order $p$ and $b^{-1}ab=ad$. Moreover,

\[
[a,d]=[b,d]=[c,d]=e, \text{so $d$ and $c$ are central}.
\]
Hence, $Z(F_5)=\langle c,d\rangle\cong C_p\times C_p$. Let $A=\langle a,c\rangle$, then $A\cong C_{p^2}\times C_p$. So $A$ is a normal abelian subgroup of index $p$, then the normal series
\[
\{e\}=G_0
<
\langle a\rangle \cong C_{p^2}
<
A=\langle a,c\rangle \cong C_{p^2}\times C_p
<
F_5.
\]

\subsubsection*{Primitive Central Idempotents of $\mathbb C[A]$}
Since $A$ is abelian, all irreducible representations are one-dimensional. For $r=0,1,\ldots,p^2-1, s=0,1,\ldots,p-1$, define the primitive central idempotents are
\[
e_{r,s}
=
\frac1{p^3}
\sum_{i=0}^{p^2-1}
\sum_{j=0}^{p-1}
\zeta^{-ri}\omega^{-sj}
a^i c^j .
\]
Then the corresponding representations are
\[
\rho_{r,s}(a)=\zeta^r,
\qquad
\rho_{r,s}(c)=\omega^s.
\]
where $\zeta=e^{2\pi i/p^2}, \omega=e^{2\pi i/p}$. Since $b^{-1}ab=ad=a^{p+1}$, we obtain

\[
\rho^{\,b}_{r,s}(a)
=
\rho_{r,s}(a^{p+1})
=
\zeta^{r(p+1)}, \text{and $\rho^{b}_{r,s}(c)$}=\omega^s.
\]
Hence, $b$ acts on the parameters by

\[
(r,s)
\longmapsto
((p+1)r \!\!\!\pmod{p^2},\, s).
\]

There are two orbit types:

\subsubsection*{Case I} If $p\mid r$ such that $r=kp, 0\le k\le p-1$, then $(p+1)r=kp+p^2k\equiv kp\pmod{p^2}$. Hence, $(r,s)$ is fixed orbit has size one. The corresponding primitive central idempotents split as
\[
e_{r,s,\ell}
=
e_{r,s}
\left(
\frac1p
\sum_{m=0}^{p-1}
\omega^{-\ell m}b^m
\right),
\]

where $\ell=0,1,\ldots,p-1$.

\subsubsection*{One-Dimensional Irreducible Representations}

For $r=kp$, the representations given as 

\[
\rho_{k,s,\ell}(a)=\zeta^{kp},
\qquad
\rho_{k,s,\ell}(b)=\omega^\ell,
\qquad
\rho_{k,s,\ell}(c)=\omega^s.
\]

\subsubsection*{Case II} If $p\nmid r$, then $(p+1)^t r\equiv r+tpr\pmod{p^2}$, and the orbit is

\[
O(r,s)
=
\{
(r,s),
((p+1)r,s),
((p+1)^2r,s),
\ldots,
((p+1)^{p-1}r,s)
\}.
\]

Since $p\nmid r$, the orbit has size $p$,then the corresponding orbit sum

\[
e_{O(r,s)}
=
\sum_{t=0}^{p-1}
e_{(p+1)^t r,s}
\]

is a primitive central idempotent of $\mathbb C[F_5]$.

\subsubsection*{$p$-Dimensional Irreducible Representations}

For $p\nmid r$, corresponding representation space is 
\[
V_{r,s}=\operatorname{span}\{v_0,v_1,\ldots,v_{p-1}\}.
\]
With respect to the basis $B=\{v_0,v_1,\ldots,v_{p-1}\}$, generators act as follows
\[
\rho_{r,s}(c)=\omega^s I_p,
\]

\[
\rho_{r,s}(b)=
\begin{pmatrix}
0&1&0&\cdots&0\\
0&0&1&\cdots&0\\
\vdots&&&\ddots&\vdots\\
0&0&0&\cdots&1\\
1&0&0&\cdots&0
\end{pmatrix}.
\]

Since $\rho_{r,s}(a)v_t=\zeta^{\,r(p+1)^t}v_t$. Therefore,

\[
\rho_{r,s}(a)
=
\operatorname{diag}
\Big(
\zeta^r,
\zeta^{r(p+1)},
\zeta^{r(p+1)^2},
\ldots,
\zeta^{r(p+1)^{p-1}}
\Big).
\]
These are all irreducible representations of $F_5$  obtained.
\subsection{The group $F_6=C_{p^2}\times (C_p\rtimes C_p)$}
\hfill\\
Consider the long presentation of group $F_6=\langle a,b,c\;\Big|\;a^{p^2}=b^p=c^p=e,\;[a,b]=[a,c]=e,\;[c,b]=a^p\rangle$. Let $d=a^p$, then $d$ has order $p$ and $b^{-1}cb=cd$. Since $a$ commutes with both $b$ and $c$, we have $[a,d]=[b,d]=[c,d]=\text{e and $Z(F_6)$}=\langle d\rangle\cong C_p$. Let $A=\langle a,c\rangle$ and $[a,c]=e$, then $A\cong C_{p^2}\times C_p$. So $A$ is a normal abelian subgroup of index $p$.
\subsubsection*{Primitive Central Idempotents of $\mathbb C[A]$}
Since $A$ is abelian, then the corresponding primitive central idempotents and all one dimensional irreducible representations are given by
\[
e_{r,s}
=
\frac1{p^3}
\sum_{i=0}^{p^2-1}
\sum_{j=0}^{p-1}
\zeta^{-ri}\omega^{-sj}
a^i c^j .
\]
For $r=0,1,\ldots,p^2-1$, $s=0,1,\ldots,p-1$, define the representations
\[
\rho_{r,s}(a)=\zeta^r,
\qquad
\rho_{r,s}(c)=\omega^s.
\]

Since $b^{-1}cb=cd$, and $d=a^p$, we obtain

\[
\rho_{r,s}^{\,b}(c)=\rho_{r,s}(cd)=\omega^s\zeta^{rp}.
\]
It follows that

\[
\rho_{r,s}^{\,b}(c)=\omega^{r+s}, \text{and $\rho_{r,s}^{\,b}(a)$}=\zeta^r (\text{$\because \zeta^{p}=\omega$}).
\]

Hence, $b$ acts on the parameters by

\[
(r,s)\longmapsto (r,s+r \pmod p).
\]

There are two orbit types:

\subsubsection*{Case I} If $r\equiv0\pmod p$ such that $r=kp, 0\le k\le p-1$, then the orbit $(r,s)\longmapsto(r,s)$ has size one. The corresponding primitive central idempotents split as

\[
e_{r,s,\ell}
=
e_{r,s}
\left(
\frac1p
\sum_{m=0}^{p-1}
\omega^{-\ell m}b^m
\right),
\]

where $\ell=0,1,\ldots,p-1$.

\subsubsection*{One-Dimensional Irreducible Representations}

For $r=kp$, the one dimensional irreducible representations are given as 

\[
\rho_{k,s,\ell}(a)=\zeta^{kp},
\qquad
\rho_{k,s,\ell}(b)=\omega^\ell,
\qquad
\rho_{k,s,\ell}(c)=\omega^s.
\]
\subsubsection*{Case II} If $r\not\equiv0\pmod p$,then $(r,s)\longmapsto(r,s+r)$, and the $p$ size of orbit is

\[
O(r,s)
=
\{
(r,s),
(r,s+r),
(r,s+2r),
\ldots,
(r,s+(p-1)r)
\}.
\]
The corresponding orbit sum

\[
e_{O(r,s)}
=
\sum_{t=0}^{p-1}
e_{r,s+tr}
\]

is a primitive central idempotent of $\mathbb C[F_6]$.

\subsubsection*{$p$-Dimensional Irreducible Representations}

For $r\not\equiv0\pmod p$, corresponding representation space is 
\[
V_{r,s}
=
\operatorname{span}
\{v_0,v_1,\ldots,v_{p-1}\}.
\]

With respect to the ordered basis $B=\{v_0,v_1,\ldots,v_{p-1}\}$, generators act as follows
\[
\rho_{r,s}(a)=\zeta^r I_p,
\]

\[
\rho_{r,s}(b)=
\begin{pmatrix}
0&1&0&\cdots&0\\
0&0&1&\cdots&0\\
\vdots&&&\ddots&\vdots\\
0&0&0&\cdots&1\\
1&0&0&\cdots&0
\end{pmatrix},
\]

and

\[
\rho_{r,s}(c)
=
\operatorname{diag}
\bigl(
\omega^s,
\omega^{s+r},
\omega^{s+2r},
\ldots,
\omega^{s+(p-1)r}
\bigr).
\]
These are all irreducible representations of $F_6$ obtained.
\subsection{The group $F_7=(C_p\times C_p\times C_p)\rtimes C_p$}
\hfill\\
Consider the long presentation of group $F_7=\langle a,b,c\;\Big|\;a^p=b^p=c^p=e,\;[a,b]=c,\;[a,c]^p=e,\;[b,c]=e,\;[a,[a,c]]=[b,[a,c]]=e\rangle$. Let $z=[a,c]$. Then $z^p=e, [a,z]=[b,z]=e$, so $z$ is central.
 Moreover,
\[
[b,c]=[b,z]=[c,z]=1.
\]
Hence
\[
A=\langle b,c,z\rangle\cong C_p\times C_p\times C_p
\]
is an abelian normal subgroup of $F_7$. Since $A\cong C_p^3$, all irreducible representations of $A$ are one-dimensional. For
\[
(r,s,t)\in\mathbb Z_p^3,
\]
define
\[
\rho_{r,s,t}(b)=\omega^r,\qquad
\rho_{r,s,t}(c)=\omega^s,\qquad
\rho_{r,s,t}(z)=\omega^t,\qquad
\omega=e^{2\pi i/p}.
\]

The defining relations give
\[
a^{-1}ba=bc^{-1},
\qquad
a^{-1}ca=cz^{-1},
\qquad
a^{-1}za=z.
\]
Consequently, the action of $a$ on $\rho_{r,s,t}$ of $A$ is
\[
\rho_{r,s,t}\longmapsto
\rho_{r-s,s-t,t}.
\]
Equivalently,
\[
(r,s,t)\longmapsto(r-s,s-t,t).
\]

More generally, for $k\in\mathbb Z_p$,
\[
a^k:
(r,s,t)\longmapsto
\left(
r-ks+\binom{k}{2}t,\,
s-kt,\,
t
\right).
\]

\subsubsection*{One-dimensional representations}

A representation $\rho_{r,s,t}$ is fixed under the action of $a$ if and only if
\[
r-s=r,\qquad s-t=s.
\]
Thus
\[
s=t=0.
\]
Therefore all one-dimensional irreducible representations are given by
\[
\rho_{q,r}(a)=\omega^q,\qquad
\rho_{q,r}(b)=\omega^r,\qquad
\rho_{q,r}(c)=1,\qquad
\rho_{q,r}(z)=1,
\]
where $q,r\in\mathbb Z_p$. Thus $F_7$ has exactly $p^2$ one-dimensional irreducible representations.\smallskip

The remaining $p^3-p$ representations of $A$ are not fixed by $a$. Since the action of $a$ has order $p$, every nontrivial orbit has size $p$. Hence the number of nontrivial orbits is $p^2-1$. Let
\[
\mathcal O(r,s,t)
=
\left\{
(r_k,s_k,t):0\leq k\leq p-1
\right\},
\]
where
\[
r_k=r-ks+\binom{k}{2}t,
\qquad
s_k=s-kt.
\]
Thus
\[
\mathcal O(r,s,t)
=
\left\{
\left(
r-ks+\binom{k}{2}t,\,
s-kt,\,
t
\right)
:0\leq k\leq p-1
\right\}.
\]

Each orbit of size $p$ gives an irreducible representation of dimension $p$. If $\mathcal O$ is a nontrivial orbit, then the corresponding primitive central idempotent of $\mathbb C[F_7]$ is
\[
E_{\mathcal O}
=
\sum_{(r,s,t)\in\mathcal O}
\varepsilon_{r,s,t}.
\]
\subsubsection*{$p$-dimensional representations}

Fix a nontrivial orbit $\mathcal O=\mathcal O(r,s,t)$ and choose a basis $B=\{v_0,v_1,\ldots,v_{p-1}\}$
corresponding to the representation
\[
\rho_k
=
\rho_{r_k,s_k,t},
\qquad
0\leq k\leq p-1.
\]

The element $a$ permutes the characters cyclically. We therefore define
\[
\rho_{r,s,t}(a)v_k=v_{k+1},
\]
where the indices are taken modulo $p$. Hence
\[
\rho_{r,s,t}(a)=
\begin{pmatrix}
0&0&0&\cdots&0&1\\
1&0&0&\cdots&0&0\\
0&1&0&\cdots&0&0\\
\vdots& &\ddots&\ddots&&\vdots\\
0&0&\cdots&1&0&0\\
0&0&\cdots&0&1&0
\end{pmatrix}.
\]

Since $b,c,z\in A$, they act diagonally on the chosen basis. More precisely,
\[
\rho_{r,s,t}(b)v_k
=
\rho_k(b)v_k
=
\omega^{r_k}v_k,
\]
and therefore
\[
\rho_{r,s,t}(b)
=
\operatorname{diag}
\left(
\omega^{r_0},
\omega^{r_1},
\ldots,
\omega^{r_{p-1}}
\right).
\]
Using
\[
r_k=r-ks+\binom{k}{2}t,
\]
we obtain
\[
\rho_{r,s,t}(b)
=
\operatorname{diag}
\left(
\omega^{r-ks+\binom{k}{2}t}
\right)_{k=0}^{p-1}.
\]

Similarly,
\[
\rho_{r,s,t}(c)v_k
=
\rho_k(c)v_k
=
\omega^{s_k}v_k,
\]
so
\[
\rho_{r,s,t}(c)
=
\operatorname{diag}
\left(
\omega^{s-kt}
\right)_{k=0}^{p-1}.
\]

Finally,
\[
\rho_{r,s,t}(z)=\omega^t I_p.
\]

Consequently, the representation associated with the orbit $\mathcal O(r,s,t)$ is given by
\[
\begin{aligned}
\rho_{r,s,t}(a)
&=
\begin{pmatrix}
0&0&0&\cdots&0&1\\
1&0&0&\cdots&0&0\\
0&1&0&\cdots&0&0\\
\vdots& &\ddots&\ddots&&\vdots\\
0&0&\cdots&1&0&0\\
0&0&\cdots&0&1&0
\end{pmatrix},\\[2mm]
\rho_{r,s,t}(b)
&=
\operatorname{diag}
\left(
\omega^{r-ks+\binom{k}{2}t}
\right)_{k=0}^{p-1},\\[2mm]
\rho_{r,s,t}(c)
&=
\operatorname{diag}
\left(
\omega^{s-kt}
\right)_{k=0}^{p-1},\\[2mm]
\rho_{r,s,t}(z)
&=\omega^tI_p.
\end{aligned}
\]

The defining relations are satisfied. Indeed,
\[
\rho_{r,s,t}(a)^{-1}
\rho_{r,s,t}(b)
\rho_{r,s,t}(a)
=
\rho_{r,s,t}(b)
\rho_{r,s,t}(c)^{-1},
\]
and
\[
\rho_{r,s,t}(a)^{-1}
\rho_{r,s,t}(c)
\rho_{r,s,t}(a)
=
\rho_{r,s,t}(c)
\rho_{r,s,t}(z)^{-1}.
\]
Moreover,
\[
[\rho_{r,s,t}(b),\rho_{r,s,t}(c)]=I_p,
\]
and $\rho_{r,s,t}(z)$ is scalar. Hence all the defining relations
of $F_7$ are satisfied.
\subsection{The group $F_8=(C_{p^2}\rtimes C_p)\rtimes C_p$}
\hfill\\
Consider the long presentation of the group
\[
F_8=\Big\langle a,b\;\Big|\;
a^{p^2}=b^p=e,\;
[a,b]^p=e,\;
[b,[a,b]]=e,\;
[a,[a,b]]=a^p
\Big\rangle.
\]
Put $c=[a,b]$, $d=a^p$. The defining relations give
\[
c^p=d^p=e,\qquad [b,c]=e,
\]
and
\[
b^{-1}ab=ac,\qquad c^{-1}ac=ad,
\]

then
\[
F_8=A\rtimes B,
\]
where
\[
A=\langle a,c\rangle\cong C_{p^2}\rtimes C_p,
\qquad
B=\langle b\rangle\cong C_p.
\]
Consequently,
\[
|A|=p^3,\qquad |F_8|=p^4.
\]
We use Clifford theory to construct the irreducible representations
of $F_8$.

\subsubsection*{Irreducible Representations of the Normal Subgroup $A$}

The commutator subgroup of $A$ is
\[
A'=\langle d\rangle\cong C_p,
\]
and
\[
A/A'\cong C_p\times C_p.
\]
Therefore, $A$ has $p^2$ one-dimensional irreducible representations.

\subsubsection*{One-Dimensional Representations of $A$}

For $u,v\in\{0,1,\ldots,p-1\}$, define
\[
\rho_{u,v}(a)=\omega^u,\qquad
\rho_{u,v}(c)=\text{$\omega^v$, where $\omega=e^{2\pi i/p}$, $\zeta=e^{2\pi i/p^2}$, $\zeta^p=\omega$}.
\]
These are pairwise inequivalent one-dimensional irreducible
representations of $A$.

\subsubsection*{$p$-Dimensional Representations of $A$}

For $k=1,\ldots,p-1$, define
\[
\rho_k(a)=
\operatorname{diag}
\left(
\zeta^k,\zeta^k\omega^k,\ldots,
\zeta^k\omega^{(p-1)k}
\right),
\]
and
\[
\rho_k(c)=
\begin{pmatrix}
0&0&\cdots&0&1\\
1&0&\cdots&0&0\\
0&1&\cdots&0&0\\
\vdots&\ddots&\ddots&\vdots&\vdots\\
0&0&\cdots&1&0
\end{pmatrix}.
\]
Then
\[
\rho_k(c)^p=I_p,\qquad \rho_k(a)^p=\omega^k I_p,
\]
and
\[
\rho_k(c)^{-1}\rho_k(a)\rho_k(c)=\omega^k\rho_k(a).
\]
These representations have degree $p$. 
\subsubsection*{The Action of $B$ on $\operatorname{Irr}(A)$}

The conjugation relations
\[
b^{-1}ab=ac,\qquad b^{-1}cb=c
\]
give
\[
\rho_{u,v}^{\,b}(a)
=\rho_{u,v}(ac)=\omega^{u+v},
\]
and
\[
\rho_{u,v}^{\,b}(c)=\omega^v.
\]
Thus the action on the parameters is
\[
(u,v)\longmapsto(u+v\pmod p,v).
\]

There are two orbit types.

\subsubsection*{Case I} If $v=0$, then
\[
(u,0)\longmapsto(u,0).
\]
Thus each $\rho_{u,0}$ is invariant under $B$, and
\[
I_{F_8}(\rho_{u,0})=F_8.
\]

Each such representation extends to $F_8$ in $p$ ways.

\subsubsection*{Case II}

If $v\ne0$, the orbit is
\[
O(u,v)=
\{(u+tv,v):t=0,1,\ldots,p-1\}.
\]
This orbit has size $p$, and its inertia subgroup is
\[
I_{F_8}(\rho_{u,v})=A.
\]
For each nonzero $v$, all values of $u$ form a single orbit.
Consequently, there are $p-1$ such orbits.

\subsubsection*{One-Dimensional Representations of $F_8$}

For $u,\ell\in\{0,1,\ldots,p-1\}$, define
\[
\eta_{u,\ell}(a)=\omega^u,\qquad
\eta_{u,\ell}(b)=\omega^\ell,\qquad
\eta_{u,\ell}(c)=1.
\]
These give $p^2$ pairwise inequivalent one-dimensional
representations of $F_8$.

\subsubsection*{$p$-Dimensional Representations Induced from Non-Invariant Representation}
For $v=1,\ldots,p-1$, define
\[
\eta_v=\operatorname{Ind}_A^{F_8}\lambda_{0,v}.
\]
Since the orbit of $\rho_{0,v}$ has size $p$, Clifford theory
implies that $\eta_v$ is irreducible of degree $p$.

Choose the ordered basis
\[
\mathcal{B}=\{e_0,e_1,\ldots,e_{p-1}\}.
\]
The proposed matrices are
\[
\eta_v(a)=
\operatorname{diag}
\left(1,\omega^v,\omega^{2v},\ldots,
\omega^{(p-1)v}\right),
\]
\[
\eta_v(c)=\omega^v I_p,
\]
and
\[
\eta_v(b)=
\begin{pmatrix}
0&0&\cdots&0&1\\
1&0&\cdots&0&0\\
0&1&\cdots&0&0\\
\vdots&\ddots&\ddots&\vdots&\vdots\\
0&0&\cdots&1&0
\end{pmatrix}.
\]

Indeed, these matrices satisfy
\[
\eta_v(b)^p=I_p,\qquad
\eta_v(a)^p=I_p.
\]
However, the proposed matrices must also satisfy the conjugation
relation
\[
\eta_v(b)^{-1}\eta_v(a)\eta_v(b)=\eta_v(a)\eta_v(c).
\]
\subsection{The group $F_9=(C_{p^2}\times C_p)\rtimes C_p$}
\hfill\\
Consider the long presentation of group 
\[
F_9=\Big\langle a,b\;\Big|\;a^{p^2}=b^p=e,\;[a,b]^p=e,\;[a,[a,b]]=e,\;[b,[a,b]]=a^p\Big\rangle.
\] 
Let $c=[a,b]$ and $d=a^p$, then defining relations imply
\[
b^{-1}ab=ac,
\qquad
b^{-1}cb=ca^{-p}.
\]
Consequently,
\[
A=\langle a,c\rangle\cong C_{p^2}\times C_p
\]
is an abelian normal subgroup of $F_{p^4}$, then the normal series
\[
\{e\} \triangleleft G_1=\langle d\rangle=\langle a^p\rangle\cong C_p \triangleleft G_2=\langle c,d\rangle\cong C_p\times C_p \triangleleft G=\langle a,c\rangle\cong C_{p^2}\times C_p \triangleleft F_9.
\]
Therefore, the representations of $A$ are as
\[
\rho_{r,s}(a)=\zeta^r,
\qquad
\rho_{r,s}(c)=\omega^s,
\qquad
(r,s)\in \mathbb Z_{p^2}\times\mathbb Z_p,
\]
here,
\[
\zeta=e^{2\pi i/p^2},
\qquad
\omega=e^{2\pi i/p}=\zeta^p.
\]
The primitive idempotent corresponding to $\rho_{r,s}$ is
\[
e_{r,s}
=
\frac{1}{p^3}
\sum_{i=0}^{p^2-1}
\sum_{j=0}^{p-1}
\zeta^{-ri}\omega^{-sj}a^ic^j.
\]

Since $A\triangleleft F_{10}$, conjugation by $b$ induces an action on irreducible representations of $A$. We have
\[
\begin{aligned}
\rho_{r,s}^{\,b}(a)
&=
\rho_{r,s}(b^{-1}ab)\\
&=
\rho_{r,s}(ac)\\
&=
\zeta^r\omega^s\\
&=
\zeta^{r+ps},
\end{aligned}
\]
while
\[
\begin{aligned}
\rho_{r,s}^{\,b}(c)
&=
\rho_{r,s}(b^{-1}cb)\\
&=
\rho_{r,s}(ca^{-p})\\
&=
\omega^s\zeta^{-pr}\\
&=
\omega^{s-r}.
\end{aligned}
\]
Therefore the induced action on $\mathbb Z_{p^2}\times\mathbb Z_p$ is $\mathcal O(r,s)=(r+ps,\;s-r)$.

More generally,
\[
\mathcal O^k(r,s)
=
\left(
r+p\bigl(ks-k(k-1)r\bigr),
\;
s-kr
\right).
\]

In particular,
\[
\mathcal O^p(r,s)=(r,s).
\]
Hence every orbit of $\mathcal O$ has size $1$ or $p$. A point $(r,s)$ is fixed by $\mathcal O$ if and only if
\[
r+ps\equiv r\pmod{p^2},
\qquad
s-r\equiv s\pmod p.
\]
Thus
\[
s\equiv0\pmod p,
\qquad
r\equiv0\pmod p.
\]
Since $p$ is odd,
\[
r\equiv0\pmod p.
\]
Consequently, the fixed points are
\[
(0,0),(p,0),(2p,0),\ldots,((p-1)p,0).
\]
There are exactly $p$ fixed points. For a non-fixed pair $(r,s)$, its orbit is
\[
\mathcal O(r,s)
=
\left\{
\left(
r+p\bigl(ks-k(k-1)r\bigr),
s-kr
\right):
0\leq k<p
\right\}.
\]
For every orbit $\mathcal O$, the element
\[
e_{\mathcal O}
=
\sum_{(r,s)\in\mathcal O}e_{r,s}
\]
is a central idempotent of $\mathbb C[F_{9}]$.

\subsubsection*{One-dimensional Representations}

Since
\[
F_{9}/[F_{9},F_{9}]
\cong C_p\times C_p,
\]
all one-dimensional irreducible representations are obtained by
choosing $i\in\{0,p,2p\ldots,(p-1)p\}$ and, $j\in\{0,1,\ldots,p-1\}$ and defining
\[
\rho_{i,j}(a)=\zeta^i,
\qquad
\rho_{i,j}(b)=\omega^j,
\qquad
\rho_{i,j}(c)=1.
\]

\subsubsection*{$p$-dimensional Representations}

Let $\mathcal O(r,s)=\{(r_0,s_0),(r_1,s_1),\ldots,(r_{p-1},s_{p-1})\}$ be a non-fixed orbit, where
\[
(r_k,s_k)=\mathcal O^k(r,s).
\]
Thus
\[
r_k
=
r+p\bigl(ks-k(k-1)r\bigr)
\pmod{p^2},
\]
and
\[
s_k=s-kr\pmod p.
\]
For each non-fixed orbit $\mathcal O(r,s)$, define
\[
\rho_{r,s}(b)=
\begin{pmatrix}
0&1&0&\cdots&0\\
0&0&1&\cdots&0\\
\vdots& &\ddots&\ddots&\vdots\\
0&0&\cdots&0&1\\
1&0&\cdots&0&0
\end{pmatrix},
\]
\[
\rho_{r,s}(a)
=
\operatorname{diag}
\left(
\zeta^{r_0},
\zeta^{r_1},
\ldots,
\zeta^{r_{p-1}}
\right),
\]
and
\[
\rho_{r,s}(c)
=
\operatorname{diag}
\left(
\omega^{s_0},
\omega^{s_1},
\ldots,
\omega^{s_{p-1}}
\right).
\]
The relations
\[
r_{k+1}\equiv r_k+ps_k\pmod{p^2}
\]
and
\[
s_{k+1}\equiv s_k-r_k\pmod p
\]
imply
\[
\rho_{r,s}(b)^{-1}
\rho_{r,s}(a)
\rho_{r,s}(b)
=
\rho_{r,s}(a)\rho_{r,s}(c)
\]
and
\[
\rho_{r,s}(b)^{-1}
\rho_{r,s}(c)
\rho_{r,s}(b)
=
\rho_{r,s}(c)\rho_{r,s}(a)^{-p}.
\]
Hence $\rho_{r,s}$ is a representation of $F_{9}$. Thus the above representations form the complete set of irreducible representations of $F_{9}$.
\subsection{The group $F_{10}=(C_{p^2}\times C_p)\rtimes C_p$}
\hfill\\
Consider the long presentation of group $F_{10}=\Big\langle a,b\;\Big|\;a^{p^2}=b^p=e,\;[a,b]^p=e,\;[a,[a,b]]=e,\;[b,[a,b]]=a^{2p}\Big\rangle$. Let $c=[a,b]$, and $d=a^p$, then group admits the normal series
\[
\{e\}
\triangleleft
G_{1}=\langle d\rangle
\cong C_{p}
\triangleleft
G_{2}=\langle c,d\rangle
\cong C_{p}\times C_{p}
\triangleleft
G=\langle a,c\rangle
\cong C_{p^{2}}\times C_{p}
\triangleleft
F_{10}.
\]
Then the defining relations give
\[
b^{-1}ab=ac,
\qquad
b^{-1}cb=ca^{-2p},
\qquad
[a,c]=e.
\]
Consequently,
\[
A=\langle a,c\rangle\cong C_{p^2}\times C_p
\]
is an abelian normal subgroup of $F_{10}$. Therefore, the representations of $A$ are as
\[
\rho_{r,s}(a)=\zeta^r,
\qquad
\rho_{r,s}(c)=\omega^s,
\qquad
(r,s)\in \mathbb Z_{p^2}\times\mathbb Z_p,
\]
here,
\[
\zeta=e^{2\pi i/p^2},
\qquad
\omega=e^{2\pi i/p}=\zeta^p.
\]
The primitive idempotent corresponding to $\rho_{r,s}$ is
\[
e_{r,s}
=
\frac{1}{p^3}
\sum_{i=0}^{p^2-1}
\sum_{j=0}^{p-1}
\zeta^{-ri}\omega^{-sj}a^ic^j.
\]

Since $A\triangleleft F_{10}$, conjugation by $b$ induces an action on irreducible representations of $A$. We have
\[
\begin{aligned}
\rho_{r,s}^{\,b}(a)
&=
\rho_{r,s}(b^{-1}ab)\\
&=
\rho_{r,s}(ac)\\
&=
\zeta^r\omega^s\\
&=
\zeta^{r+ps},
\end{aligned}
\]
while
\[
\begin{aligned}
\rho_{r,s}^{\,b}(c)
&=
\rho_{r,s}(b^{-1}cb)\\
&=
\rho_{r,s}(ca^{-2p})\\
&=
\omega^s\zeta^{-2pr}\\
&=
\omega^{s-2r}.
\end{aligned}
\]
Therefore the induced action on $\mathbb Z_{p^2}\times\mathbb Z_p$ is $\mathcal O(r,s)=(r+ps,\;s-2r)$.

More generally,
\[
\mathcal O^k(r,s)
=
\left(
r+p\bigl(ks-k(k-1)r\bigr),
\;
s-2kr
\right).
\]
In particular,
\[
\mathcal O^p(r,s)=(r,s).
\]
Hence every orbit of $\mathcal O$ has size $1$ or $p$. A point $(r,s)$ is fixed by $\mathcal O$ if and only if
\[
r+ps\equiv r\pmod{p^2},
\qquad
s-2r\equiv s\pmod p.
\]
Thus
\[
s\equiv0\pmod p,
\qquad
2r\equiv0\pmod p.
\]
Since $p$ is odd,
\[
r\equiv0\pmod p.
\]
Consequently, the fixed points are
\[
(0,0),(p,0),(2p,0),\ldots,((p-1)p,0).
\]
There are exactly $p$ fixed points. For a non-fixed pair $(r,s)$, its orbit is
\[
\mathcal O(r,s)
=
\left\{
\left(
r+p\bigl(ks-k(k-1)r\bigr),
s-2kr
\right):
0\leq k<p
\right\}.
\]
For every orbit $\mathcal O$, the element
\[
e_{\mathcal O}
=
\sum_{(r,s)\in\mathcal O}e_{r,s}
\]
is a central idempotent of $\mathbb C[F_{10}]$.

\subsubsection*{One-dimensional representations}

Since
\[
F_{10}/[F_{10},F_{10}]
\cong C_p\times C_p,
\]
all one-dimensional irreducible representations are obtained by
choosing $i\in\{0,p,2p\ldots,(p-1)p\}$ and, $j\in\{0,1,\ldots,p-1\}$ and defining
\[
\rho_{i,j}(a)=\zeta^i,
\qquad
\rho_{i,j}(b)=\omega^j,
\qquad
\rho_{i,j}(c)=1.
\]

\subsubsection*{$p$-dimensional representations}

Let $\mathcal O(r,s)=\{(r_0,s_0),(r_1,s_1),\ldots,(r_{p-1},s_{p-1})\}$ be a non-fixed orbit, where
\[
(r_k,s_k)=\mathcal O^k(r,s).
\]
Thus
\[
r_k
=
r+p\bigl(ks-k(k-1)r\bigr)
\pmod{p^2},
\]
and
\[
s_k=s-2kr\pmod p.
\]
For each non-fixed orbit $\mathcal O(r,s)$, define
\[
\rho_{r,s}(b)=
\begin{pmatrix}
0&1&0&\cdots&0\\
0&0&1&\cdots&0\\
\vdots& &\ddots&\ddots&\vdots\\
0&0&\cdots&0&1\\
1&0&\cdots&0&0
\end{pmatrix},
\]
\[
\rho_{r,s}(a)
=
\operatorname{diag}
\left(
\zeta^{r_0},
\zeta^{r_1},
\ldots,
\zeta^{r_{p-1}}
\right),
\]
and
\[
\rho_{r,s}(c)
=
\operatorname{diag}
\left(
\omega^{s_0},
\omega^{s_1},
\ldots,
\omega^{s_{p-1}}
\right).
\]

Equivalently,
\[
\rho_{r,s}(a)
=
\operatorname{diag}_{0\leq k<p}
\left(
\zeta^{\,r+p(ks-k(k-1)r)}
\right),
\]
and
\[
\rho_{r,s}(c)
=
\operatorname{diag}_{0\leq k<p}
\left(
\omega^{\,s-2kr}
\right).
\]

The relations
\[
r_{k+1}\equiv r_k+ps_k\pmod{p^2}
\]
and
\[
s_{k+1}\equiv s_k-2r_k\pmod p
\]
imply
\[
\rho_{r,s}(b)^{-1}
\rho_{r,s}(a)
\rho_{r,s}(b)
=
\rho_{r,s}(a)\rho_{r,s}(c)
\]
and
\[
\rho_{r,s}(b)^{-1}
\rho_{r,s}(c)
\rho_{r,s}(b)
=
\rho_{r,s}(c)\rho_{r,s}(a)^{-2p}.
\]
Hence $\rho_{r,s}$ is a representation of $F_{10}$. Thus the above representations form the complete set of irreducible representations of $F_{10}$.
\section*{Acknowledgements}
The first author gratefully acknowledge Babasaheb Bhimrao Ambedkar University, Lucknow, India, for providing the facilities and support necessary to carry out this research. The second author acknowledges NBHM–DAE, Government of India (Grant No. 02011/37/2023 NBHM (R.P.)/R \& D II/13508).
\subsection*{Data Availability}
No datasets were generated or analysed during the current study.
\subsection*{Author Contributions}
The first author was responsible for manuscript preparation and validation. Both the first and second authors carefully reviewed the manuscript and approved the final version.
\subsection*{Competing interests}
The authors declare no competing interests.

\end{document}